\documentclass[12pt]{amsart}
\usepackage[utf8]{inputenc}
\usepackage{graphicx}
\usepackage{amsfonts}
\usepackage{amsmath}
\usepackage{amsthm}
\usepackage{amssymb}
\usepackage{hyperref}
\usepackage{mathtools}
\usepackage{geometry}
\usepackage{comment}
\usepackage{subfigure}
\usepackage{pgfplots}

\usepackage{booktabs}
\usepackage{xtab}
\usepackage{xcolor}

\theoremstyle{definition}
\newtheorem{theorem}{Theorem}[section]
\newtheorem{corollary}[theorem]{Corollary}
\newtheorem{lemma}[theorem]{Lemma}
\newtheorem{proposition}[theorem]{Proposition}
\newtheorem{definition}[theorem]{Definition}
\newtheorem{remark}[theorem]{Remark}
\newtheorem{problem}[theorem]{Problem}
\newtheorem{assumption}[theorem]{Assumption}

\DeclareMathOperator{\Tr}{Tr}
\newcommand{\Div}{\mbox{\rm div}}

\newcommand{\R}{\mathbb{R}}
\newcommand{\go}{{\Gamma_0}}

\newcommand{\id}[1]{{\rm id}_{#1}}
\newcommand{\D}[1]{\underline{D}_{#1}}

\newcommand{\divg}{{\rm div}_\Gamma}
\newcommand{\goh}{{\Gamma_0^h}}
\newcommand{\Divg}{{\rm div}_\go}

\newcommand{\dee}{~{\rm d}}
\newcommand{\ud}{{\rm d}}
\newcommand{\XX}{\mathcal{X}}
\newcommand{\conormal}{{\rm n}}

\newcommand{\jump}[1]{{[\![{#1}]\!]}}
\newcommand{\avg}[1]{\{\!\{{#1}\}\!\}}

\newcommand{\orcidlogo}{\includegraphics[height=10pt]{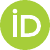}}
\newcommand{\orcid}[1]{\href{https://orcid.org/#1}{\orcidlogo}}

\date{\today}
\title[Small deformations of a Canham-Helfrich tube]{Small deformations of a near cylindrical tube for the Canham-Helfrich Energy with applications to biological membranes}

\author{Charles M.~Elliott}
\address{
  \orcid{0000-0002-6924-4455}
  Mathematics Institute, Zeeman Building, University of Warwick, Coventry. CV4 7AL, United Kingdom}
\thanks{The work of CME was partially supported by the Royal Society via a Wolfson Research Merit Award.}

\author{Carsten Gr\"aser}
\address{
  \orcid{0000-0003-4855-8655}
  Department Mathematik, Friedrich-Alexander-Universit\"at Erlangen-N\"urnberg, Cauerstra{\ss}e 11, 91058 Erlangen, Germany}
\thanks{This research has been
    supported 
    by Deutsche Forschungsgemeinschaft (DFG)
    through the grant CRC 1114: “Scaling Cascades in Complex Systems”,
    Project Number 235221301,
    Project A07 "Langevin dynamics of particles in membranes".}

\author{Philip J.~Herbert}
\address{
  \orcid{0000-0002-6513-1728}
  Department of Mathematics, University of Sussex, Brighton, BN1 9RF, United Kingdom}\email{p.herbert@sussex.ac.uk}

\keywords{Surface deformations, Helfrich energy, point forces, surface PDE, existence and uniqueness of weak solutions, Helfrich Cylinder}
\subjclass[2010]{Primary 74L15, 49Q10, 58J90 , 65N30 
}

\begin{document}
\begin{abstract}
In this article we develop a quadratic energy which approximates the Canham-Helfrich energy for a tube-like surface with clamped boundary and area constraint.
The energy is suited to the study of small deformations of biological membranes where the deformations are induced by point forces or point constraints due to the cytoskeleton or a phase dependent spontaneous curvature.
Since the deformations we consider are small, we may assume that the surface of interest is a graph over a fixed, undeformed surface.
A Lagrangian and the associated Euler-Lagrange equations for the graph are derived.
Well-posedness of the Euler-Lagrange equations in suitable spaces is shown.
Finally, we provide some illustrative numerical examples.
\end{abstract}
\maketitle

\section{Introduction}
In this paper, we study small deformations of tubular surfaces, $\Gamma$, described by Helfrich-type energies.
It is assumed that surface deformations are induced by small external forces leading  to  a deformed surface that can  be described by a graph over a reference  tube.
Here we consider a surface  with boundary, where we assume that $\partial \Gamma$ is given together with the conormal. 
This work will derive a quadratic approximation to the Helfrich-type energy in terms of the height function of the graph over this tube, leading to a linear fourth order  PDE on the reference surface in contrast to a non-linear geometric PDE.
We will show that this linear PDE is well-posed and consider different forcing, including 
point forcing, point constraints and forcing along a curve.
A non-linear phase field perturbation is also considered.

This work is motivated by the setting where  the surface models  a lipid-bilayer  forming   a membrane tubule or part of the boundary  of a cell. We suppose  that the membrane is deformed by small forces from a reference tube that is a local minimiser of the energy.
These forces may be located at points or along curves and the force may be due to proteins attached to the surface.
Another forcing we will consider is  due to the effect of saturated lipids leading to a surface gradient energy.
Since the morphology of the cell membrane is well-known to be coupled to embedded or attached  proteins, the modelling of small deformations of the membrane may help to give qualitative descriptions of the interplay.
We highlight \cite{McmGal05} which discusses the main mechanisms of membrane deformation.

We are interested in the case where the surface  has a boundary.
This is motivated by a setting wherein the membrane may be a medium transporting proteins between two larger vesicles.
These structures are known as membrane tubes (or membrane tubules) and  are common structures in many cellular organelles with diverse roles within a cell.
We consider  surfaces with fixed  area that are  clamped at the boundary  where  they may attach to  larger vesicles.
In this setting the area constraint may be justified by considering that the tube is at equilibrium.

It is known that the shape of the lipid bilayer which surrounds a cell is well-modelled by the Canham-Helfrich energy \cite{Can70,Hel73}
\begin{equation}\label{eq:CH}
	E(\Gamma) := \int_\Gamma \left( \frac{\kappa}{2} (H-H_s)^2 + \kappa_G K \right) \!\!\dee \Gamma,
\end{equation}
where the membrane is assumed to be sufficiently thin that it may be modelled by a two-dimensional hypersurface $\Gamma\subset \R^3$ with mean and Gaussian curvatures $H$ and $K$ respectively.
We take $H$ to be the sum of the principle curvatures, twice the typical value.
The constants $\kappa>0,\,\kappa_G\in \R$ are the bending rigidities respectively due to the mean and Gaussian curvatures.
The value $H_s \in \R$ is the spontaneous curvature and corresponds to an asymmetry in the lipid bilayer, for example due to a different concentration of saturated and unsaturated lipids.
In the above, we consider that $H_s$ will be a given small function.
This smallness assumption appears in the works \cite{EllHat21,EllHatHer20,CaeEllGra26} which all consider models with small deformations around the sphere.

\subsection*{Main contribution}
The main contribution of this work is to produce a linear fourth order PDE which describes small deformations of a membrane tube.
Associated to this, we show the well-posedness of the PDE, and include a variety of biologically relevant forcing.
Implicitly this shows that the Helfrich-Cylinder is a stable local minimiser of the Willmore energy with a specific surface tension, which has been shown in an axi-symmetric regime.

\subsection*{Membrane tubes}
Tubular structures have been identified as early as the 1950s under examination by electron microscopy \cite{Por53}, where they are seen to be part of the endoplasmic reticulum, a network of interconnected tubules and cisternae.
Membrane tubes can be generated by in-vitro tether-pulling experiments \cite{CulChiBas05,LiAnvTak02,SunGraHeg05,DerJulPro02}.
It has been demonstrated that cells exchange enclosed material via the formation of similar narrow fluid membrane tubes known as tunnelling nanotubes \cite{MarGouZur12,GerCar08}.
These membrane tubes typically have a diameter of 50 nm to 200 nm and may extend over tens of microns.
Tubular membranes have also been studied in static and dynamical settings.
In particular, linearised descriptions on cylindrical geometries have been used to analyse the response of membrane tubes to localised forces, active processes, and curvature-inducing inclusions, highlighting the role of geometry in determining deformation modes and stability \cite{AlIRowSen18, AlISenTur20, JanLieAlI24}.
The article \cite{Wei18} discusses how proteins on the membrane are important for tubulation - the formation of tubes - to occur on a membrane and \cite{BahHum17} explores the formation and stability of tubes.
For further details on the physics of membrane tubes, we refer the reader to \cite{Rou13}.

\subsection*{Related work}
In \cite{EllFriHob17}, the authors derive a fourth order PDE which qualitatively describes the shape of a near-spherical or near-toroidal membrane under the assumption of small deformations.
In \cite{Kie19}, the author develops a quadratic energy for a tube based on the Taylor expansion ($f(1)\approx f(0) + f'(0)+ \frac{1}{2}f''(0)$) of the Canham-Helfrich Energy \eqref{eq:CH} for arbitrary $H_s$.
The work \cite{EllGraHob16} neglects Gauss curvature and boundary contributions to consider a geometric linearisation of \eqref{eq:CH}, the so-called Monge gauge, and shows well-posedness with point particle constraints and curve constraint models amongst other models.
 
The quadratic models mentioned above have been used to consider membrane mediated interactions of objects attached to the membrane.
The works of \cite{GraKie17,EllHer21} consider shape derivatives of simplified membrane energies, the geometric linearisation of \cite{EllGraHob16} or small deformation energy of \cite{EllFriHob17}, where the shape is respectively the location of curve and point constraints as in \cite{EllGraHob16} and \cite{EllHatHer20}.

The work of \cite{Sch10} considers the minimisation of a Willmore energy when the boundary is clamped,
existence of a smooth embedded minimiser is shown for sufficiently small energy.
The minimisation of the Willmore functional for graphs with either Navier
or clamped boundary data is considered in \cite{DecGruRog18}.

Axi-symmetry is exploited to gain additional structure.
The article \cite{DecDoeGru21} studies the structure of minimisers of the Helfrich energy with surface tension and clamped boundary in an axi-symmetric setting, in particular solutions near a cylinder c.f.~Section 4. See also \cite{MeiUec25}.
The work of \cite{BarGarNur19} deals with the gradient flow of the Willmore functional for both open and closed surfaces in an axi-symmetric setting.
In \cite{DecGru09a} the authors are interested in the Willmore functional with Navier boundary.
They are also interested in small deformations of this energy, considering the second variation of their axi-symmetric energy at a section of a Clifford torus.
The axi-symmetry leads to a linear fourth order ODE for which the general solution is given.
In \cite{Ols23}, the author combines Helfrich and tangential surface Navier--Stokes models to obtain a system describing the surface under constraints, for which axi-symmetric shapes are admissible.

For historical details and modelling aspects of membranes with boundary, we refer to \cite{Nit93}.

\subsection*{Outline}
We start in Section \ref{sec:Notation} by introducing notation and definitions for surface PDE along with a few necessary results.
In Section \ref{sec:Modelling}, we give the derivation for a quadratic Lagrangian, following this in Section \ref{sec:critPoints} is a discussion of the existence of critical points of the derived Lagrangian. 
We then, in Section \ref{sec:ExampleForcing} provide a series of examples of forcings which may give the membrane a small deformation.
We conclude with Section \ref{sec:numExp} with numerical experiments demonstrating the examples of the preceding section.

\section{Notation and preliminaries}\label{sec:Notation}
Let $\Gamma$ be a two-dimensional connected, orientable hypersurface in $\R^3$, in particular $\partial\Gamma$ need not be empty.
We assume that $\Gamma$ and its boundary are as regular as required, but at most $C^4$.
The unit normal to $\Gamma$ is denoted by $\nu$, where we make an arbitrary choice of direction.
We define $P_\Gamma := I - \nu \nu^T$  to be, at each point of $\Gamma$, the projection onto the tangent space at that point, where $I$ is the identity matrix on $\R^3$.

For a differentiable function $f \colon \Gamma \to \R$, we define the tangential gradient of $f$ by
\[
	\nabla_\Gamma f := P_\Gamma \nabla \tilde{f}|_{\Gamma},
\]
where $\tilde{f}$ is a differentiable extension of $f$ to an open neighbourhood of $\Gamma$.
Here $\nabla$ is the standard gradient in $\R^3$.
It holds that the above definition does not depend on the extension $\tilde{f}$ \cite[Lemma 2.4]{DziEll13}.
The components of the tangential gradient are denoted $(\D{1}f,\D{2}f,\D{3} f)^T := \nabla_\Gamma f$.
For a continuously differentiable function $\eta \colon \Gamma \to \R^3$, the tangential divergence $\divg$ is defined by
\[
    \divg \eta = \sum_{i=1}^3 \D{i} \eta_i.
\]
For a twice differentiable function the Laplace-Beltrami operator is defined
\[
	\Delta_\Gamma f := \divg \nabla_\Gamma f
\]
and the surface Hessian is defined
\[
    (D^2_\Gamma f)_{ij} := \D{i}\D{j} f \mbox{ for } i,j = 1,2,3,
\]
which need not be symmetric c.f.~\cite[Lemma 2.6]{DziEll13}.

It is known \cite[Lemma 2.8]{DziEll13} that there is a neighbourhood, $\mathcal{N}_\delta$, of $\Gamma$, with width $\delta>0$ and maps $d\colon \mathcal{N}_\delta \to \R$, $\pi \colon \mathcal{N}_\delta \to \Gamma$ such that for any $x \in \mathcal{N}_\delta$ the decomposition
\begin{equation}\label{eq:orientedDistanceFunction}
	x = \pi(x) + d(x) \nu(\pi(x))
\end{equation}
is unique.
The function $d$ is known as the oriented distance function and $\pi$ the closest point projection.

The map $\mathcal{H}:= \nabla_\Gamma \nu$ is called the extended Weingarten map, which is symmetric and $\mathcal{H} \nu =0$.
The eigenvalues of $\mathcal{H}$, which belong to the tangential eigenvectors, $\kappa_1,\kappa_2$, are the principle curvatures of $\Gamma$.
The mean curvature $H = \Tr(\mathcal{H})$ is the sum of the principle curvatures, and the Gauss curvature $K = \det(\mathcal H)$, the product.
We denote the identity map on $\Gamma$ by $\id{\Gamma}$ and recall that $-\Delta_\Gamma \id{\Gamma} = H \nu$.

We say $f \in L^1(\Gamma)$ has a weak derivative $\D{i}f:= v_i\in L^1(\Gamma)$ if
\[
	\int_\Gamma f \D{i} \phi \dee \Gamma = \int_\Gamma \left( - v_i \phi + f \phi H \nu_i \right) \!\!\dee \Gamma
\]
for all smooth functions $\phi\colon \Gamma \to \R$ with compact support in $\Gamma$.
The integrals are taken with respect to the two-dimensional Hausdorff measure on $\Gamma$.
We also have the Stokes formula,
\begin{equation}\label{eq:StokesFormula}
	\int_\Gamma \nabla_\Gamma f \dee \Gamma = \int_\Gamma f H \nu \dee \Gamma + \int_{\partial \Gamma} f \conormal \dee \mathcal{H}^1,
\end{equation}
where $\conormal$ is the outward unit co-normal which is normal to $\partial\Gamma$ and tangent to $\Gamma$.
The Sobolev spaces $H^1(\Gamma)$ and $H^2(\Gamma)$ on the hypersurface $\Gamma$ are defined by
\begin{align*}
	H^1(\Gamma) :=& \{ f \in L^2(\Gamma) : \D{i} f \in L^2(\Gamma),\, i=1,2,3\},
	\\
	H^2(\Gamma) :=& \{ f \in H^1(\Gamma) : \D{i}\D{j} f \in L^2(\Gamma),\, i,j=1,2,3\}.
\end{align*}
These spaces are Hilbert spaces when endowed with appropriate inner products and norms.
We choose to use the following inner products and norms,
\begin{align*}
	(u,v)_{1,2}
	:=&
	\int_\Gamma \left( \nabla_\Gamma u \cdot \nabla_\Gamma v + u v \right)\!\! \dee \Gamma
	~\mbox{and}~
	\|u\|_{1,2}:= \sqrt{(u,u)_{1,2}},
	\\
	(u,v)_{2,2}
	:=&
	\int_\Gamma \left( \Delta_\Gamma u \Delta_\Gamma v +  \nabla_\Gamma u \cdot \nabla_\Gamma v + u v \right)\!\! \dee \Gamma
	~\mbox{and}~
	\|u\|_{2,2}:= \sqrt{(u,u)_{2,2}}.
\end{align*}
Notice that this is not the standard inner product on $H^2(\Gamma)$ which contains mixed second order derivatives.
On a closed surface this given norm is equivalent to the standard norm \cite[Lemma 3.2]{DziEll13}.
The equivalence also holds on a surface with boundary if one has appropriate boundary conditions, for example vanishing trace, which follows from Lemma \ref{lem:EquivNormDirichlet}.
\begin{corollary}\label{cor:equivNorms}
    There is $C>0$ such that
	\[
		\|v\|_{H^2(\Gamma)}\leq C\|v\|_{2,2} \quad \forall v \in H^2_0(\Gamma).
	\]
	In particular the norms $\|\cdot\|_{2,2}$ and $\|\cdot \|_{H^2}$ are equivalent on $H^2_0(\Gamma)$.
\end{corollary}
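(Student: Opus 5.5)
The corollary follows from Lemma~\ref{lem:EquivNormDirichlet}. That lemma is referred to just before the statement, so we may assume its content. We read it as a Miranda--Talenti/Bochner type estimate for functions with vanishing trace: there is $C>0$ such that
\[
	\|D^2_\Gamma v\|_{L^2(\Gamma)}^2 \le C\left(\|\Delta_\Gamma v\|_{L^2(\Gamma)}^2 + \|\nabla_\Gamma v\|_{L^2(\Gamma)}^2\right)
\]
for $v\in H^2(\Gamma)\cap H^1_0(\Gamma)$, and in particular for $v \in H^2_0(\Gamma)$. The plan is to combine this with the trivial bounds for the lower order terms. Since $\|v\|_{H^2(\Gamma)}^2$ consists of the $L^2$ norms of $v$, of $\nabla_\Gamma v$ and of all $\D{i}\D{j}v$, adding $\|\nabla_\Gamma v\|^2 + \|v\|^2$ to both sides gives $\|v\|_{H^2(\Gamma)}^2 \le (C+1)\|v\|_{2,2}^2$.

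In case the lemma must be justified rather than quoted, the route is as follows. First prove the inequality for smooth $v$ with compact support in $\Gamma$, which is dense in $H^2_0(\Gamma)$, and then pass to the limit.

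For such $v$, integrate $|D^2_\Gamma v|^2$ by parts twice using the Stokes formula \eqref{eq:StokesFormula}. The boundary terms vanish by the compact support, and the curvature terms $H\nu$ drop out when contracted with tangential quantities. The commutator of tangential derivatives is given by \cite[Lemma 2.6]{DziEll13}. The result is the Bochner-type identity
\[
	\int_\Gamma |D^2_\Gamma v|^2 \dee\Gamma = \int_\Gamma \left( (\Delta_\Gamma v)^2 + \mathcal{R}(\nabla_\Gamma v,\nabla_\Gamma v)\right)\!\dee\Gamma,
\]
where $\mathcal{R}$ is a bounded zeroth-order term involving $\mathcal{H}$, for instance $-K|\nabla_\Gamma v|^2$ up to terms in $H\mathcal{H}$. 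Because $\Gamma$ is $C^2$ up to the boundary, $\mathcal{H}$ is bounded, so $|\int_\Gamma \mathcal{R}| \le C\|\nabla_\Gamma v\|^2$. This yields the estimate above, and density extends it to all of $H^2_0(\Gamma)$.

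The main obstacle is the commutator bookkeeping in the double integration by parts, in particular checking that the extended Hessian $D^2_\Gamma v$, including its normal components, is controlled. Its non-symmetric part is $\mathcal{H}\nabla_\Gamma v\otimes\nu$-type, hence lower order and bounded by $C|\nabla_\Gamma v|$. The reverse inequality $\|v\|_{2,2}\le C\|v\|_{H^2(\Gamma)}$ is immediate, since $\Delta_\Gamma v$ is the trace of $D^2_\Gamma v$ and so $|\Delta_\Gamma v|^2\le 3|D^2_\Gamma v|^2$. Together the two inequalities give the claimed equivalence of norms.
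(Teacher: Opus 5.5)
Your proposal is correct and follows the paper's route. The paper also obtains the corollary from Lemma~\ref{lem:EquivNormDirichlet} by adding $\|\nabla_\Gamma v\|_{L^2(\Gamma)}^2+\|v\|_{L^2(\Gamma)}^2$, and your sketch of the lemma matches its appendix argument: two integrations by parts, vanishing boundary terms, and commutators from \cite[Lemma 2.6]{DziEll13} producing a bounded term in $\mathcal{H}$. Note that the lemma is only stated and proved for $H^2_0(\Gamma)$, not $H^2(\Gamma)\cap H^1_0(\Gamma)$, since the boundary terms are killed using $\nabla_\Gamma v=0$ on $\partial\Gamma$; this is harmless because you only use the $H^2_0$ case, and your density argument via compactly supported functions is if anything cleaner than the paper's.
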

\begin{definition}\label{def:GeoQuant}
We now define the following geometric quantities
\begin{align*}
	W(\Gamma) := \frac{1}{2}\int_{\Gamma}H^2 \dee \Gamma,\quad
	A(\Gamma) := \int_{\Gamma} 1\dee \Gamma,\quad
	G(\Gamma) := \int_\Gamma K\dee \Gamma,
\end{align*}
the Willmore energy, surface area, and the integral of the Gauss curvature of $\Gamma$, respectively.
We then define, for $\kappa>0$, $\kappa_G \in \R$,
\begin{align*}
	\mathcal{E}(\Gamma) := \kappa W(\Gamma) + \kappa_G{G}(\Gamma).
\end{align*}
\end{definition}
It is noted that $\kappa_G \in [-\kappa,0]$ is the physically relevant range \cite{Nit93}.
The case $\kappa_G = 0$ corresponds to the standard Willmore energy, and $\kappa_G = -\kappa$ corresponds to the square integral of so-called deviatoric curvature (the difference of principle curvatures) \cite{Fis92}.
It is however noted that by the Gauss-Bonnet theorem, it holds that $G(\Gamma)$ is constant for surfaces with a clamped boundary as we will consider in this work, therefore we set $\kappa_G = 0$.

\section{Modelling of small surface deformations}\label{sec:Modelling}
We pose the problem:-\\
Given a small force $\rho \mathcal{F}$ and a small {\it spontaneous curvature} $\rho \tilde H_0 \colon \R^3 \to \R$, we wish to find a surface $\Gamma$ which minimises
\begin{equation}\label{eq:rhoForcing}
	\mathcal{J}_\rho(\Gamma):= \mathcal{E}(\Gamma) - \kappa \int_\Gamma \rho \tilde H_0 H \dee \Gamma + \frac{\kappa}{2}\int_\Gamma \rho^2 \tilde H_0^2 \dee \Gamma - \rho \mathcal{F}(\Gamma),
\end{equation}
over surfaces $\Gamma$ which have a prescribed surface area and whose boundaries are clamped.
In order to remove the area constraint, let us consider consider the Lagrangian
\begin{equation}
    {L}_\rho(\Gamma, \sigma)
    =
    \mathcal{J}_\rho(\Gamma)
    +
    \sigma (A(\Gamma) - A_*),
\end{equation}
for some $A_*>0$, the prescribed area, and $\sigma \in \R$ a Lagrange multiplier.
Finding critical points of ${L}_\rho$ is challenging.
It is natural to assume that a sufficiently small force applied to the membrane will result in a perturbed membrane which may be described as a graph over $\go$, a critical surface of $L_0$.

\subsection{Ansatz Surface}

Given $R,L>0$ let us denote the Helfrich-Cylinder by
\begin{equation}\label{eq:TheTube}
	\go := \left\{ x \in \R^3 : x_1^2 + x_2^2 = R^2,\, x_3 \in \left(-\frac{L}{2},\frac{L}{2}\right)\right\}.
\end{equation}
As $\go$ is a surface with boundary, take $\nu \colon \go \to \R^3$ to be given by $\nu(x) = \frac{1}{R}(x_1,x_2,0)^T$.
We note that $\go$ does not include its boundary.
The choice that the tube goes from $-\frac{L}{2}$ to $\frac{L}{2}$ is arbitrary, as is the choice that the length is along the $x_3$ axis.
Let us note that $\go \cong \mathbb{S}^1\times (0,L)$ and it may be convenient to use cylindrical polar coordinates,
\begin{equation}\label{eq:convenientParameterisation}
	X\colon	[0,2\pi) \times \left(- \frac{L}{2},\frac{L}{2}\right) \to \go, \quad
	X\colon	(\theta, z) \mapsto \left(R\sin(\theta), R \cos(\theta),z\right),
\end{equation}
where we understand $X$ to be $2\pi$-periodic in its first variable.
For $g\in C^1(\go)$ and $G:= g \circ X$, the parametrisation of $g$ by $X$, we have
\[
	(\nabla_\go g) \circ X= \frac{(X_2,-X_1,0)^T}{R^2} \partial_\theta G +  (0,0,1)^T \partial_z G.
\]
It is convenient to write the unit azimuthal vector
\begin{equation}\label{eq:tauDefinition}
	\tau \colon \go \to \R^3, \quad \tau(x):= \frac{(x_2,-x_1,0)^T}{R}
\end{equation}
and denote $\hat{z}(x) = (0,0,1)^T$, which together form a basis of the tangent space of $T_x \go$ for all $x \in \go$.

For convenience, the oriented distance function for the tube can be given explicitly by $d(x) := \sqrt{x_1^2 + x_2^2} -R$ for $x_3 \in (-\frac{L}{2},\frac{L}{2})$ which in the neighbourhood $\mathcal{U} := \{ x \in \R^3 : x_1^2 + x_2^2 >0,\, |x_3| \leq \frac{L}{2}\}$ is smooth.

Let us calculate the variation of
\begin{equation}
    L_0(\Gamma,\sigma)
    =
    \frac{\kappa}{2} W(\Gamma)
    +
    \sigma (A(\Gamma)-A_0).
\end{equation}
By making use of Proposition \ref{prop:firstVar}, it holds that
\begin{equation}\begin{split}
    \partial_\Gamma L_0(\Gamma,\sigma)[U\nu]
    =&\
    \kappa \int_\Gamma H\left( - \Delta_\Gamma U + \frac{1}{2} H^2U - |\mathcal{H}|^2 U\right) \dee \Gamma
    +
    \sigma \int_\Gamma H U  \dee \Gamma,
    \\
    \partial_\sigma L_0(\Gamma,\sigma)[\mu]
    =&\
   (A(\Gamma) - A_*)\mu
\end{split}\end{equation}
where $\partial_\Gamma$ is written to mean the derivative with respect to shape, and we evaluate in direction $U\nu$ and $\mu \in \R$ .

One may calculate that on $\go$, $H = |\mathcal{H}| = \frac{1}{R}$.
Hence, for $A_* = 2\pi R L$, we see that for variations $U$ with $\nabla_{\Gamma_0}U\cdot \conormal
=0$ on $\partial \go$
\begin{equation}
    L_0'(\go, \frac{\kappa}{2 R^2}) = 0,
\end{equation}
so the pair $(\go, \frac{\kappa}{2 R^2})$ is a critical point of $L_0$.
We will denote $\sigma_0 = \frac{\kappa}{2 R^2}$.

Later, c.f.~Theorem \ref{thm:EnergyCoercive}, we will demonstrate that $\partial_\Gamma^2L_0(\go,\sigma_0)$ is positive definite so $\go$ is not only a critical point with constrained area, but a local minimiser.
As such, it becomes reasonable to consider small perturbations of the energy.

\subsection{Small perturbations}
Following the notation of \cite{EllFriHob17}, given $v \colon \go \to \R$ sufficiently smooth and $\rho>0$ sufficiently small, we denote the graph
\begin{equation}\label{eq:DefiningGraphSurface}
	\Gamma_\rho(v):= \{ x + \rho v(x)\nu_\go(x) : x \in \go\}.
\end{equation}
We will fix $\rho \ll 1$, and look for a critical point of the form $(\Gamma_\rho, \sigma_\rho)$, where we have, for simplicity, dropped $v$ in the notation of $\Gamma_\rho(v)$. 
As $\Gamma_\rho$ is a first order perturbation of $\go$, let us consider that $\sigma_\rho = \sigma_0 + \rho \mu$.

With an ansatz surface and Lagrange multiplier $(\go, \sigma_0)$, for fixed $v$ and $\mu$ we will Taylor expand the highly non-linear Lagrangian
\begin{equation}\label{eq:LagrangianGeometric}
	\mathcal{L}_\rho(v,\mu):= \mathcal{J}_\rho(\Gamma_\rho) + (\sigma_0 + \rho \mu )(A(\Gamma_\rho) -A_*)
\end{equation}
in powers of $\rho$.
This leads to
\begin{equation}\label{eq:TaylorExpansion}
    \nonumber
    \mathcal{L}_\rho(v,\mu)
	=
	\mathcal{L}_0(v,\mu)
	+
	\rho \left.\frac{\ud \mathcal{L}_s(v,\mu)}{\ud s}\right|_{s = 0}
	+
	\frac{\rho^2}{2} \left.\frac{\ud ^2\mathcal{L}_s(v,\mu)}{\ud s^2}\right|_{s = 0}
	+
	O(\rho^3).
\end{equation}
We will see later, for reasonable assumptions on the forcing $\mathcal{F}$, that both the zero and first order terms will be independent of the pair $(v,\mu)$.
Hence for small $\rho$, higher order terms are negligible and the second order terms are the leading terms where variations of $(v,\mu)$ will be non-trivial.
Our approach will henceforth be to find a critical point of the second order terms.

In the above we have a Lagrange multiplier to enforce the area constraint, to handle the boundary conditions for the surface, we restrict the perturbation $v$ to be a function which lies in functions with vanishing trace and whose normal derivative vanishes at the boundary.

For the moment, we do not consider a specific type of force and instead provide some assumptions on the forces.
Examples of possible forces which appear in the literature are point forces \cite{EllFriHob17}, point constraints \cite{EllHatHer20,EllHer20,EllHer21} or a phase dependent spontaneous curvature \cite{EllHat21}.
These examples are discussed in Section \ref{sec:ExampleForcing}.
The following assumptions on $\mathcal{F}$ and $H_s$ are made.
\begin{assumption}\label{ass:FGo0}
Assume that:
\begin{itemize}
\item	The surface energy $\mathcal{F}$ satisfies $\mathcal{F}(\go) = 0$.
\item   There is $f \in \left( H^2(\go) \cap H_0^1(\go) \right)^*$ such that $\mathcal{F}'(\go)[v\nu] = \langle f, v\rangle$ for all $v \in H^2(\go)\cap H_0^1(\go)$, where $\langle\cdot,\cdot\rangle$ denotes the dual pairing.
\item	The function $\tilde H_0$ is constant in the normal direction around $\go$, that is to say, $\tilde H_0(x) = \tilde H_0(\pi(x))$ for all $x \in \mathcal{N}_\delta$, i.e., there is some function $c_0\colon \go \to \R$ such that $\tilde H_0 = c_0 \circ \pi $.
\end{itemize}
\end{assumption}
By Assumption \ref{ass:FGo0}, we see that
\[
    \int_{\Gamma_\rho} H \tilde H_0 \dee \Gamma = \int_{\Gamma_\rho} H c_0 \circ \pi \dee \Gamma
    \quad\mbox{and}\quad
    \int_{\Gamma_\rho} \tilde H_0^2 \dee \Gamma = \int_{\Gamma_\rho } c_0^2 \circ \pi \dee \Gamma.
\]




\subsection{The first variation}
\begin{lemma}\label{lem:FirstDerivative}
	The derivative of $\mathcal{L}_\rho$ is given by
	\begin{align*}
		\left. \frac{\ud \mathcal{L}_s(v,\mu)}{\ud s}\right|_{s=0}
		=&
        - \frac{\kappa}{R} \int_{\go} (\Delta_\go v + c_0)\dee \go
        .
	\end{align*}
\end{lemma}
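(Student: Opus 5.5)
We compute the derivative at $s=0$ by splitting $\mathcal{L}_s(v,\mu)$ into its pieces and differentiating each along the normal graph $\Gamma_s(v)=\{x+sv\nu:x\in\go\}$, i.e.\ along the velocity field $v\nu$ on $\go$. We write
\[
\mathcal{L}_s = \kappa W(\Gamma_s) - \kappa s\int_{\Gamma_s} H\, c_0\circ\pi \dee\Gamma + \frac{\kappa s^2}{2}\int_{\Gamma_s}c_0^2\circ\pi\dee\Gamma - s\mathcal{F}(\Gamma_s) + (\sigma_0+s\mu)(A(\Gamma_s)-A_*),
\]
using Assumption \ref{ass:FGo0} to replace $\tilde H_0$ by $c_0\circ\pi$.

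Differentiating at $s=0$, the $s^2$-term contributes nothing. The forcing term contributes $-\mathcal{F}(\go)=0$ by Assumption \ref{ass:FGo0}. The spontaneous curvature term contributes $-\kappa\int_\go H c_0\dee\go=-\frac{\kappa}{R}\int_\go c_0\dee\go$, since $H=1/R$ on $\go$. The multiplier term gives $\mu(A(\go)-A_*)+\sigma_0 A'(\go)[v\nu]$, and the first part vanishes because $A_*=2\pi RL=A(\go)$.

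The remaining pieces are the first variations of $W$ and $A$, which we take from Proposition \ref{prop:firstVar} (as in the displayed formula for $\partial_\Gamma L_0$). Note that $\mathcal{L}$ uses $\kappa W$, so we must track the factor carefully. This gives
\[
\kappa\int_\go H\Big(-\Delta_\go v+\tfrac12H^2v-|\mathcal H|^2v\Big)\dee\go+\sigma_0\int_\go Hv\dee\go.
\]
With $H=|\mathcal H|=1/R$, the zeroth order terms are
\[
\frac{\kappa}{R}\Big(\frac{1}{2R^2}-\frac1{R^2}\Big)v+\frac{\kappa}{2R^2}\cdot\frac1R v=0,
\]
so they cancel exactly because $\sigma_0=\kappa/(2R^2)$. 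What survives is $-\frac{\kappa}{R}\int_\go\Delta_\go v\dee\go$. Adding the $c_0$ contribution yields the claimed formula.

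The main obstacle is the bookkeeping in the first variation of the Willmore part. The paper's $L_0$ is written with $\frac{\kappa}{2}W$, while its variation formula carries $\kappa$. We must check that, with the $\kappa W$ normalisation of $\mathcal{E}$, the variation coefficient of $\int H(-\Delta v+\dots)$ is indeed $\kappa$, i.e.\ $W'=\int H(-\Delta v+\frac12H^2v-|\mathcal H|^2v)$ with $W=\frac12\int H^2$. Otherwise the cancellation with $\sigma_0$ fails.

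We also confirm that no boundary terms arise. Proposition \ref{prop:firstVar} is stated for variations in normal direction, and we keep $\int_\go\Delta_\go v$ unintegrated, which matches the statement; by Stokes \eqref{eq:StokesFormula} it would vanish anyway for $v$ with vanishing conormal derivative. Finally, the $s$-dependence of $c_0\circ\pi$ on $\Gamma_s$ only enters at order $s^2$, since that term already carries a factor $s$.
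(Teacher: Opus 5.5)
Your proposal is correct and follows the paper's proof. You differentiate each piece of $\mathcal{L}_s$ at $s=0$, remove the forcing and multiplier terms using $\mathcal{F}(\go)=0$ and $A_*=A(\go)$, and apply Proposition~\ref{prop:firstVar} with $H=|\mathcal{H}|=1/R$ and $\sigma_0=\kappa/(2R^2)$, so that only $-\frac{\kappa}{R}\int_\go(\Delta_\go v+c_0)\dee\go$ survives. Working from the general first-variation formula is also the safer choice: it yields the zeroth-order coefficient $-\frac{1}{2R^2}$ that the cancellation needs, whereas the cylinder-specialised line in Proposition~\ref{prop:firstVar} misprints it as $+\frac{1}{2R^2}$. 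Your remark that $L_0$ should carry $\kappa W$ rather than $\frac{\kappa}{2}W$ is likewise correct.
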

\begin{proof}
	We calculate
	\begin{align*}
	\left.\frac{\ud \mathcal{L}_s(v,\mu)}{\ud  s} \right|_{s=0}
	=&
	\mathcal{E}'(\go)[v\nu] + \sigma_0 A'(\go)[v\nu]
	-
	\kappa\left. \frac{\ud}{\ud s}\right|_{s=0} \left( s \int_{\Gamma(\rho v)} H c_0 \circ \pi \dee \Gamma \right)
	\\
	&+
    \frac{\kappa}{2}\left. \frac{\ud}{\ud s}\right|_{s=0} \left( s^2 \int_{\Gamma_s} c_0^2 \circ \pi \dee \Gamma \right)
	+ \mu (A(\go) -A_*)- \mathcal{F}(\go).
	\end{align*}
	The first part of Assumption \ref{ass:FGo0} gives $\mathcal{F}(\go) = 0$,
    the assumption that $A_* = 2\pi R L = A(\go)$ means that the term involving $\mu$ vanishes.
	The first variations of $W$ and $A$ are given in Proposition \ref{prop:firstVar} and
	for the spontaneous curvature terms, one calculates,
	\[
	    \left.\frac{\ud}{\ud s}\right|_{s=0}  \left( -\kappa s \int_{\Gamma_s} H c_0 \circ \pi \dee \Gamma + \frac{\kappa s^2}{2} \int_{\Gamma_s} c_0^2 \circ \pi \dee \Gamma \right)
	    =
	    -\kappa \int_\go H c_0 \dee \go.
	\]
\end{proof}

\begin{remark}\label{rem:ObservationsOnFirstVariation}
	From \eqref{eq:StokesFormula}, it is seen $\int_\go \Delta_\go v\dee \go = \int_{\partial \go} \nabla_\go v \cdot \conormal \dee \go $.
	It follows that for any $(v,\mu) \in H^2_0(\go) \times \R$
    \begin{equation}
        \left. \frac{\ud \mathcal{L}_s(v,\mu)}{\ud s}\right|_{s=0}
        =
        -\frac{\kappa}{R}\int_\go c_0 \dee \go.
    \end{equation}
    In this setting, we see that $\int_\go c_0 \dee \go$ is a given constant, which may correspond to the saturation of the lipids within the surface.
\end{remark}

\subsection{The second variation}
We now calculate the second order contributions of $\mathcal{L}_\rho(v,\mu)$.
It is worth noting that the second variation of geometric objects is quite delicate and we refer the reader to \cite[Remark 3.2]{EllFriHob17} which highlights a particular challenge.
This is avoided in our setting by our choice of boundary conditions.

\begin{lemma}
	For $(v,\mu) \in H^2(\go) \times \R$, one has
	\begin{align*}
		\left. \frac{1}{2}\frac{\ud^2 \mathcal{L}_s(v, \mu)}{\ud s^2} \right|_{s=0}=&
		\frac{\kappa}{2}\int_\go \left( (\Delta_\go v)^2 - \frac{2}{R^2}(\tau\cdot \nabla_\go u)^2 
		 + \frac{1}{R^4}v^2 \right) \dee \go
		\\
		&+\kappa\int_\go \left(  {c_0}{} \Delta_\go v + \frac{c_0^2}{2}\right) \dee \go
		+ \frac{\mu}{R} \int_\go v \dee \go
		- \mathcal{F}'(\go) [v\nu].
	\end{align*}
\end{lemma}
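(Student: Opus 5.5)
We organise the computation by splitting $\mathcal{L}_s(v,\mu)$ into four pieces and differentiating each twice at $s=0$. The pieces are the bending part $\kappa W(\Gamma_s)+\sigma_0(A(\Gamma_s)-A_*)$, the Lagrange multiplier correction $s\mu(A(\Gamma_s)-A_*)$, the spontaneous curvature terms $-\kappa s\int_{\Gamma_s}Hc_0\circ\pi + \frac{\kappa s^2}{2}\int_{\Gamma_s}c_0^2\circ\pi$, and the forcing $-s\mathcal{F}(\Gamma_s)$. For the three pieces carrying an explicit factor of $s$, only first variations of geometric quantities are needed, by the product rule. Explicitly, $\frac12\frac{\ud^2}{\ud s^2}(s\,g(s))|_{s=0}=g'(0)$ and $\frac12\frac{\ud^2}{\ud s^2}(s^2 g(s))|_{s=0}=g(0)$.

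Applying this to the multiplier term gives $\mu A'(\go)[v\nu]=\mu\int_\go Hv=\frac{\mu}{R}\int_\go v$ by Proposition \ref{prop:firstVar}. The forcing term gives $-\mathcal{F}'(\go)[v\nu]$. The $c_0^2$ term gives $\frac{\kappa}{2}\int_\go c_0^2$. For the $Hc_0$ term we need $\frac{\ud}{\ud s}\int_{\Gamma_s}H\,c_0\circ\pi$. Since $c_0\circ\pi$ is constant along normals and the velocity is normal, the material derivative of $c_0\circ\pi$ vanishes. Using the known first variation $\partial^\circ H=-\Delta_\Gamma v-|\mathcal{H}|^2v$ and of the area element ($Hv$), we obtain
\[
\int_\go c_0\big(-\Delta_\go v-|\mathcal{H}|^2 v+H^2v\big).
\]
On $\go$ we have $H^2=|\mathcal{H}|^2=R^{-2}$, so this reduces to $-\int_\go c_0\Delta_\go v$. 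Multiplying by $-\kappa$ yields the claimed term $\kappa\int_\go c_0\Delta_\go v$.

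The main obstacle is the genuine second variation of $\kappa W+\sigma_0 A$ at $\go$ in the direction $v\nu$. I would compute it directly for the graph $\Gamma_s=\{x+sv\nu\}$ rather than through general second variation formulas, avoiding the subtleties mentioned in \cite[Remark 3.2]{EllFriHob17}. The plan is to use the parametrisation \eqref{eq:convenientParameterisation}, write $\Gamma_s$ as $(R+sV)$ times the radial direction, and expand to second order in $s$: the metric $g_{ij}$, the area element, the normal, and the mean curvature $H_s=\frac1R+sH_1+s^2H_2$. Then $\frac12\frac{\ud^2}{\ud s^2}$ of $\frac\kappa2\int H_s^2\sqrt{g}+\sigma_0\int\sqrt{g}$ is a quadratic form in $V$ involving $V_{zz}$, $V_{\theta\theta}/R^2$, $V_\theta/R$, $V_z$ and $V$. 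The first-order term $H_1=-\Delta_\go v-\frac{v}{R^2}$ is known from the first variation, so the real work is $H_2$ and the cross terms. Alternatively, one can use the second variation identity $\partial^\circ\partial^\circ$ for the Willmore functional at a critical point, where the tangential contributions vanish.

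After expanding, I would simplify using integration by parts on $\go$. The boundary terms vanish because the intended test class has $v=\nabla_\go v\cdot\conormal=0$ on $\partial\go$. The angular direction is periodic, and the $z$-boundary terms vanish with the clamped data. We also substitute $\sigma_0=\frac{\kappa}{2R^2}$. The $\sigma_0$ contribution, $\sigma_0\int(\frac12|\nabla_\go v|^2+\dots)$, should combine with the bending terms, which contain $|\nabla v|^2/R^2$ pieces split between the $\tau$ and $\hat z$ directions. The aim is to leave exactly $\frac\kappa2\int\big((\Delta_\go v)^2-\frac{2}{R^2}(\tau\cdot\nabla_\go v)^2+\frac{1}{R^4}v^2\big)$. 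I expect this bookkeeping, namely the cancellation of the $\hat z$-gradient terms against the $\sigma_0$ term while the azimuthal terms survive with coefficient $-2/R^2$, to be the delicate step, and I would check it on Fourier modes $e^{ik\theta}\sin(mz)$.
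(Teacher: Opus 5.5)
Your proposal is correct. For the terms carrying explicit powers of $s$ (the multiplier, the forcing and the two spontaneous-curvature terms) it is the paper's argument: the product rule reduces them to $g'(0)$ or $g(0)$, and $H^2=|\mathcal H|^2=R^{-2}$ on $\go$ reduces the $Hc_0$ term to $\kappa\int_\go c_0\Delta_\go v\dee\go$.

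The real difference is the main term. The paper computes nothing there. It inserts $W''(\go)[v\nu,v\nu]$ and $A''(\go)[v\nu,v\nu]$ from Proposition~\ref{prop:secondVar} (simplified from \cite{EllFriHob17}) and notes that $-\frac{\kappa}{2R^2}|\nabla_\go v|^2$ cancels against $\sigma_0|\nabla_\go v|^2$. You instead derive these second variations directly from the graph $r=R+sv$. You only sketched this, but it works as you predict. With $v_\theta,v_z$ the coordinate derivatives of $v\circ X$ and area element $\sqrt g\,\ud\theta\,\ud z$, one gets
\[
H_s=\frac1R-s\Big(\Delta_\go v+\frac{v}{R^2}\Big)+s^2\Big(\frac{v^2}{R^3}+\frac{v_\theta^2}{2R^3}-\frac{v_z^2}{2R}+\frac{2v\,v_{\theta\theta}}{R^3}\Big)+O(s^3),
\]
\[
\sqrt g=R+sv+s^2\Big(\frac{v_\theta^2}{2R}+\frac{R\,v_z^2}{2}\Big)+O(s^3).
\]
After integrating by parts this gives
\[
\frac12\frac{\ud^2}{\ud s^2}\Big|_{s=0}\int_{\Gamma_s}H^2\dee\Gamma=\int_\go\Big((\Delta_\go v)^2-\frac{5}{2R^2}(\tau\cdot\nabla_\go v)^2-\frac{1}{2R^2}(\hat z\cdot\nabla_\go v)^2+\frac{v^2}{R^4}\Big)\dee\go,
\]
\[
\frac12\frac{\ud^2}{\ud s^2}\Big|_{s=0}A(\Gamma_s)=\frac12\int_\go|\nabla_\go v|^2\dee\go .
\]
This is exactly Proposition~\ref{prop:secondVar}. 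Adding $\sigma_0=\kappa/(2R^2)$ times the area term removes the $\hat z$-gradient and leaves $-\frac{2}{R^2}(\tau\cdot\nabla_\go v)^2$, as you anticipated.

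What each route buys:
\begin{itemize}
\item The paper's route is two lines, but it rests on a general formula whose derivation is not reproduced, including the second-variation subtlety of \cite[Remark 3.2]{EllFriHob17}.
\item Your route is longer but self-contained. It differentiates along the very family $x+sv\nu$ that defines $\mathcal L_s$, so that subtlety never arises.
\item Your route also shows which boundary information is used: every boundary term produced is of the form $v\,\partial_z v$ at $z=\pm L/2$. So your restriction to clamped $v$ is not cosmetic. The lemma as stated for arbitrary $v\in H^2(\go)$ needs, for example, $v\in H^1_0(\go)$.
\end{itemize}
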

\begin{proof}
	It holds that
	\begin{align*}
		\left.\frac{\ud^2 \mathcal{L}(v,\mu)}{\ud s^2}\right|_{s=0}
		=&
		\mathcal{E}''(\go)[u\nu,v\nu] + \sigma_0 A''(\go)[v\nu,v\nu]
		-
		\kappa \left. \frac{\ud^2}{\ud s^2}\right|_{s=0} \left( s \int_{\Gamma_s} H c_0\circ \pi \dee \Gamma \right)
		\\
		&+
		\frac{\kappa}{2}\left. \frac{\ud^2}{\ud s^2}\right|_{s=0} \left( s^2 \int_{\Gamma_s} c_0^2 \circ \pi \dee \Gamma  \right)
		+ 2\mu A'(\go)[v\nu]
		- 2 \mathcal{F}'(\go)[v\nu].
	\end{align*}
	The second variations of $W$ and $A$ are given in Proposition \ref{prop:secondVar},
	For the spontaneous curvature terms, one has
	\[
	 \left.\frac{\ud^2}{\ud s^2}\right|_{s=0}  \left( -\kappa s \int_{\Gamma_s} H c_0 \circ \pi \dee \Gamma  + \frac{\kappa s^2}{2} \int_{\Gamma_s} c_0^2 \circ \pi \dee \Gamma \right)
	 =
	 \kappa \int_\go\left( 2c_0 \Delta_\go v + c_0^2 \right)\!\! \dee \go.
	\]
	Combining the calculations, we arrive at
	\begin{align*}
		\left.\frac{\ud^2 \mathcal{L}_s(v, \mu)}{\ud s^2} \right|_{s=0}=&
		\kappa \int_\go \left( (\Delta_\go v)^2 - \frac{2}{R^2} (\tau\cdot \nabla_\go v)^2 - \frac{1}{2R^2}|\nabla_\go v|^2 + \frac{1}{R^4} v^2 \right) \dee \go
		\\
		&+
		\sigma_0 \int_\go |\nabla_\go v|^2 \dee \go
		+
		\kappa \int_\go \left( 2\Delta_\go v c_0 + c_0^2 \right) \dee \go
		\\
		&+
		\frac{2\mu}{R} \int_\go v \dee \go
		 - 2 \mathcal{F}'(\go)[v\nu].
\end{align*}
Recalling that $\sigma_0 = \frac{\kappa}{2R^2}$ completes the result.
\end{proof}
We note that $\tau \cdot \nabla_\go v = \frac{1}{R}\partial_\theta ( v \circ X)\circ X^{-1}$ is an anisotropic term which represents the derivative in the curved direction.
For convenience, we define $L\colon H^2(\go)\times \R^2\to \R$ by
	\[
		L(v,\mu) := \left.\frac{1}{2}\frac{\ud^2 \mathcal{L}_s(v,\mu)}{\ud s^2}\right|_{s=0}.
	\]

\section{Critical points of the approximating Lagrangian}
\label{sec:critPoints}
From the calculations above, we have demonstrated that
\begin{equation}
	\mathcal{L}_{\rho} \left( v, \mu \right) = C_0 + \rho C_1 + \rho^2 L(v, \mu) + O(\rho^3)
\end{equation}
where $C_0 = \kappa W(\go)$ and $C_1 = -\frac{\kappa}{R} \int_\go c_0 \dee \go$ are independent of $\rho \in \R$, $v\in H^2_0(\go)$, and $\mu \in \R$.
Since $L$ is quadratic, it is much easier to analyse than the highly non-linear functional $\mathcal{L}_{\rho}$.
Hence, we seek critical points of $L$.
It is convenient to introduce the following function space.
\begin{definition}
Define $U_C$ by
\[
	U_C := \left\{ v \in H^2_0(\go) : \int_\go v \!\! \dee \go = 0  \right\},
\]
the space of admissible membranes with clamped boundary.
\end{definition}

We wish to define a bilinear form so that we can apply standard Lax-Milgram-type results for the minimisation of quadratic energies under constraints.

\begin{definition}\label{def:BilinearFormsA}
	We define the bilinear form $a\colon H^2(\go) \times H^2(\go) \to \R$ by
	\begin{align*}
		a(v,\eta):=&
		\kappa\int_{\go} \left( \Delta_\go v \Delta_\go \eta - \frac{2}{R^2}(\tau\cdot \nabla_\go v) (\tau\cdot \nabla_\go \eta) + \frac{1}{R^4} v \eta \right) \dee \go,
	\end{align*}
	the quadratic functional $J\colon H^2(\go) \to \R$ by
	\[
		J(v):= \frac{1}{2}a(v,v),
	\]
	and the linear form $\ell\colon H^2(\go) \to \R$ by
	\[
		\langle \ell,v\rangle := \kappa \int_\go - c_0 \Delta_\go v \dee \go
	\]
\end{definition}
Notice that for $v \in H^2(\go) \cap H_0^1(\go)$ and $\mu \in \R$,
\[
	L(v,\mu) = J(v) + \frac{\kappa}{2}\int_{\go} c_0^2 \dee \go + \frac{\mu}{R} \int_\go v \dee \go - \langle f + \ell, v\rangle
\]
which follows from the assumption on the form of $\mathcal{F}'(\go)$ made in Assumption \ref{ass:FGo0}.
We may now state the problem of interest.
\begin{problem}\label{prob:ClampedBoudnary}
	Find $u \in U_C$ such that
	\[
		a(u,v) = \langle f + \ell, v\rangle \quad \forall v \in U_C.
	\]
	Equivalently, find $u \in U_C$ such that
	\[
		J(u) - \langle f+\ell, u\rangle = \min_{v \in U_C} \left( J(v) - \langle f + \ell, v\rangle \right).
	\]
\end{problem}

\begin{remark}
    A solution to Problem \ref{prob:ClampedBoudnary} will (weakly) satisfy the following PDE
    \begin{equation}\label{eq:ClampedPDEForMinimisation}
    \begin{split}
        \kappa\left( \Delta_\go^2 u + \frac{2}{R^2}\Div_\go \left(\tau \tau^T \nabla_\go u \right)+ \frac{1}{R^4}u \right)&=f - \kappa \Delta_\go c_0 - \frac{\mu}{R} \quad \mbox{in}~ \go,
        \\
        u|_{\partial\go} =
        \conormal \cdot \nabla_\go u |_{\partial\go} &= 0,
        \\
        \int_\go u \dee \go &= 0,
    \end{split}
    \end{equation}
    where $\mu\in \R$ is the Lagrange multiplier enforcing $\int_\go u \dee \go =0$.

    Notice that formally, as $R \to \infty$, we obtain the PDE $\kappa \Delta^2 u = f - \kappa\Delta c_0 $, recovering the Monge-Gauge \cite{EllGraHob16} with no surface tension.
\end{remark}

\subsection{Coercivity of the quadratic model}

To show well-posedness of Problem \ref{prob:ClampedBoudnary} it is sufficient to show that $a$ is coercive in the $\|\cdot\|_{2,2}$ norm over the larger space $H^2(\go)\cap H_0^1(\go)$ since it is clearly bounded and bilinear.
Indeed it is clear that $a$ is non-negative over $H^2(\go) \cap H^1_0(\go)$ since
\[
	a(v,v) \geq \kappa\int_\go \left(\Delta_\go v + \frac{1}{R^2} v\right)^2 \dee \go
\]
for any $v \in H^2(\go)\cap H^1_0(\go)$.

We now give the following spectral result for the Laplace-Beltrami operator on $\go$.
\begin{proposition}
	The eigenvalues of $-\Delta_\go$ in $H^2(\go) \cap H^1_0(\go)$ are given by
	\[
		\left\{ \left( \frac{m\pi}{L}\right)^2 + \left(\frac{n}{R} \right)^2 : m,n \in \mathbb{N}_0,\, m>0 \right\},
	\]
	with eigenfunctions (in polar coordinates),
	\[
		\left\{
		(z,\theta) \mapsto
		\begin{cases}
		\cos\left(\frac{m\pi}{L} z\right) \cos(n\theta),\, \cos\left(\frac{m\pi}{L} z\right) \sin(n\theta) \quad m \mbox{ odd}
		\\
		\sin\left(\frac{m\pi}{L} z\right) \cos(n\theta),\, \sin\left(\frac{m\pi}{L} z\right) \sin(n\theta) \quad m \mbox{ even}
		\end{cases}
		: m,n \in \mathbb{N}_0, m>0 \right\}.
	\]
\end{proposition}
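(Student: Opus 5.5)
Using the parametrisation \eqref{eq:convenientParameterisation}, we have $\Delta_\go g = (R^{-2}\partial_\theta^2 G + \partial_z^2 G)\circ X^{-1}$, so the Dirichlet eigenvalue problem $-\Delta_\go u = \lambda u$, $u=0$ on $\partial\go$, becomes the problem on the flat cylinder $\mathbb{S}^1\times(-\tfrac L2,\tfrac L2)$ with periodicity in $\theta$ and homogeneous Dirichlet conditions at $z=\pm\tfrac L2$. The plan is to separate variables, check directly that the listed functions are eigenfunctions with the listed eigenvalues, and then show by completeness that there is nothing else.

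The first step is verification. For $u=\phi_m(z)\psi_n(\theta)$ with $\psi_n\in\{\cos(n\theta),\sin(n\theta)\}$ and $\phi_m$ equal to $\cos(\tfrac{m\pi}{L}z)$ for $m$ odd and $\sin(\tfrac{m\pi}{L}z)$ for $m$ even, a direct computation gives
\[
-\Delta_\go u = \Big(\big(\tfrac{m\pi}{L}\big)^2+\big(\tfrac nR\big)^2\Big)u .
\]
At $z=\pm L/2$ the argument is $\pm m\pi/2$. For $m$ odd the cosine vanishes there, and for $m$ even the sine vanishes there. Hence $u\in H^2(\go)\cap H^1_0(\go)$; it is smooth up to the boundary. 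For $n=0$ only the cosine factor is retained, since $\sin(0)=0$.

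The second step is completeness, which is the main point. The family $\{\psi_n\}_{n\ge0}$ is the Fourier basis of $L^2(\mathbb{S}^1)$. The family $\{\phi_m\}_{m\ge1}$ is the complete set of Dirichlet eigenfunctions of $-\partial_z^2$ on $(-\tfrac L2,\tfrac L2)$: it is the shifted sine basis $\sin(\tfrac{m\pi}{L}(z+\tfrac L2))$, which equals $\pm\phi_m$. Their tensor products therefore form an orthogonal basis of $L^2(\go)$, using that the surface measure is $R\,\ud\theta\,\ud z$.

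The third step excludes any further eigenvalue. Suppose $u\in H^2(\go)\cap H^1_0(\go)$ satisfies $-\Delta_\go u=\lambda u$ and is orthogonal to all the listed eigenfunctions. Green's formula \eqref{eq:StokesFormula} holds here because $u$ has vanishing trace and each listed eigenfunction $e$ does too, so the Laplacian is symmetric on this space. This gives
\[
\lambda(u,e)=(-\Delta_\go u,e)=(u,-\Delta_\go e)=\lambda_e(u,e).
\]
If $\lambda$ does not equal any listed value, then $(u,e)=0$ for every $e$, and completeness forces $u=0$. Hence every eigenvalue appears in the list. Alternatively, one can expand $u$ in the basis and read off its coefficients.

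The only delicate point is justifying the $L^2$ completeness of the tensor-product basis on the cylinder. This is the standard fact that tensor products of orthonormal bases of $L^2(A)$ and $L^2(B)$ form a basis of $L^2(A\times B)$, and I would cite it rather than prove it.
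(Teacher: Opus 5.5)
Your proposal is correct and takes the same approach as the paper. The paper's proof just appeals to the local isometry of $\Gamma_0$ with a flat rectangle and to the standard separation-of-variables argument for the Laplacian there. You supply the details the paper leaves implicit: the periodic-in-$\theta$, Dirichlet-in-$z$ structure, direct verification of the listed eigenpairs, $L^2$-completeness of the tensor-product basis, and symmetry of $-\Delta_{\Gamma_0}$ on $H^2(\Gamma_0)\cap H^1_0(\Gamma_0)$ to rule out any further eigenvalues or eigenfunctions.
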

\begin{proof}
This is a consequence of $\go$ being locally isometrically-isomorphic to a rectangular subset of a plane and standard arguments for the eigenvalues/functions of the Laplace operator on a rectangle.
An immediate consequence of this is that the span of the eigenfunctions is dense in $H^2(\go) \cap H^1_0(\go)$.
\end{proof}
\begin{theorem}\label{thm:EnergyCoercive}
	Let $\alpha := \frac{\pi R}{L}$ be half of the aspect ratio of $\go$.
	For $\alpha>0$, there is $c(\alpha)>0$ such that
	\[
		a(v,v) \geq c(\alpha) \|v\|_{2,2}^2
	\]
	for any $v \in H^2(\go) \cap H^1_0(\go)$.
\end{theorem}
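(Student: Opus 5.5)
Since every term in $a$ is diagonal in the eigenbasis of the preceding Proposition, I will prove the bound mode by mode. For $v \in H^2(\go)\cap H^1_0(\go)$ expand $v = \sum_{m,n} v_{mn}\phi_{mn}$ in the $L^2$-orthonormalised eigenfunctions; the density statement in the proof of the Proposition justifies this in $H^2\cap H^1_0$, and by continuity of both sides it is enough to treat finite sums. Write $\lambda_{mn} = (m\pi/L)^2 + (n/R)^2$. On the tube, $\tau\cdot\nabla_\go$ is $\frac1R\partial_\theta$. This operator maps $\cos(n\theta)\leftrightarrow \sin(n\theta)$ with factor $n$, and it preserves the $z$-profile. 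Hence the modes stay orthogonal under it and
\[
\int_\go (\tau\cdot\nabla_\go v)^2 = \sum \frac{n^2}{R^2} v_{mn}^2 .
\]
Similarly $\int(\Delta_\go v)^2=\sum\lambda_{mn}^2v_{mn}^2$, $\int|\nabla_\go v|^2=\sum\lambda_{mn}v_{mn}^2$ and $\int v^2=\sum v_{mn}^2$. Consequently $a(v,v)=\kappa\sum s_{mn}v_{mn}^2$ and $\|v\|_{2,2}^2=\sum(\lambda_{mn}^2+\lambda_{mn}+1)v_{mn}^2$. The whole theorem then reduces to a uniform lower bound on the symbol
\[
s_{mn} := \lambda_{mn}^2 - \frac{2n^2}{R^4} + \frac{1}{R^4}
\]
relative to $\lambda_{mn}^2+\lambda_{mn}+1$.

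To bound the symbol, nondimensionalise with $p = m\alpha$ (so $m\pi/L = p/R$) and $q=n$. Then
\[
R^4 s_{mn} = (p^2+q^2)^2 - 2q^2 + 1 = p^4 + 2p^2q^2 + (q^2-1)^2 .
\]
Since $m\ge1$ forces $p\ge\alpha>0$, this is at least $p^4 + 2p^2q^2 \ge \alpha^2(p^2+2q^2)$ for $p\ge\alpha$, and it is also $\ge p^4$. These two bounds give $R^4 s_{mn} \ge c_1(\alpha)\,(p^2+q^2)^2 = c_1(\alpha) R^4\lambda_{mn}^2$ for some $c_1(\alpha)>0$. For instance, when $q^2\le p^2$ one has $p^4\ge\frac14(p^2+q^2)^2$. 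When $q^2>p^2\ge\alpha^2$, one uses $2p^2q^2+(q^2-1)^2 \ge$ a multiple of $q^4$, checked separately for $q=1$ (value $p^4+2p^2\ge 3\alpha^4$-type bound) and $q\ge2$ (where $(q^2-1)^2\ge \frac{9}{16}q^4$).

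Finally, since $\lambda_{mn}\ge (\pi/L)^2=\alpha^2/R^2>0$, the lower-order terms are controlled by the top one: $\lambda_{mn}+1\le C(\alpha,R)\lambda_{mn}^2$. Combining the two estimates gives $s_{mn}\ge c(\alpha)(\lambda_{mn}^2+\lambda_{mn}+1)$ mode by mode, and summing over modes yields $a(v,v)\ge c\|v\|_{2,2}^2$.

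The main obstacle is the $n=1$, small-$m$ regime. Here $(q^2-1)^2=0$, so the constant $1/R^4$ term exactly cancels against the anisotropic $-2/R^4$ term; this reflects the near-rigid-translation modes of the tube. Coercivity survives only through $p^4+2p^2$, i.e.\ through the Dirichlet condition forcing $m\ge1$. I would therefore check that case explicitly to get the dependence $c(\alpha)\sim\alpha^2$ (or $\alpha^4$) as $\alpha\to0$. I also expect the constant to depend on $R$ through the lower-order terms, unless the norm is scaled, which I would state explicitly.
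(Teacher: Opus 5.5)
Your proof is correct and is essentially the paper's argument. Like the paper, you diagonalise $a$ in the Dirichlet eigenbasis of $-\Delta_\go$ and bound the symbol $\lambda_{mn}^2-\frac{2n^2}{R^4}+\frac{1}{R^4}$ from below mode by mode. Your rewriting as $p^4+2p^2q^2+(q^2-1)^2$ with a case split on $q$ plays the role of the paper's split into $\alpha\ge 1$ and $\alpha<1$. Your density-plus-continuity reduction to finite sums replaces the paper's projection argument.

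The one difference worth noting concerns the lower-order terms. You absorb $\lambda_{mn}+1$ into $\lambda_{mn}^2$ via the spectral gap $\lambda_{mn}\ge\alpha^2/R^2$, which, as written, makes your constant decay like $\alpha^6$. The paper instead keeps the zeroth-order term and obtains, for $\alpha<1$, $a(v,v)\ge\frac{\kappa\alpha^2}{2}\|\Delta_\go v\|_{0,2}^2+\frac{\kappa\alpha^2}{R^4}\|v\|_{0,2}^2$. That is the $\alpha^2$ rate its subsequent remark shows to be sharp. For the mere existence of $c(\alpha)>0$ claimed in the statement, this difference is immaterial.
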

\begin{proof}
    Notice that, if $P_{\tilde{N}} \colon H^2(\go)\cap H^1_0(\go) \to H^2(\go)\cap H^1_0(\go)$ is the projection onto the first $\tilde N$ eigenfunctions, then $a(u, P_{\tilde N} v) = a(P_{\tilde N} u,v)$ for all $u,v \in H^2(\go) \cap H^1_0(\go)$ by the particular form of $a$.
    Therefore, one may calculate
    \[
        a(P_{\tilde N} u,P_{\tilde N} u) \leq a(P_{\tilde N} u,P_{\tilde N} u) + a(u-P_{\tilde N}u,u-P_{\tilde N} u) = a(u,u).
    \]
    Since $P_{\tilde N} v$ converges weakly in $H^2(\go) \cap H^1_0(\go)$ to $v$ as $\tilde N \to \infty$, it is sufficient to assume that $v$ is given by
	\[
		\left (v \circ X\right)(z,\theta) = \sum_{m=1}^M \sum_{n=0}^N \alpha_{mn}\cos\left( \frac{m\pi}{L}z\right) \sin \left( n(\theta-\theta_{mn}) \right)
		+
		\beta_{mn}\sin\left( \frac{m\pi}{L}z\right) \sin \left( n(\theta-\Theta_{mn}) \right),
	\]
	where $\alpha_{mn} = 0$ for $m$ even and $\beta_{mn} = 0$ for $m$ odd.
	It is then possible to calculate
	\[
		a(v,v)
		=
		\kappa c\sum_{m=1}^M\sum_{n=0}^N (\alpha_{mn}^2+\beta_{mn}^2)
		\left[ \left(\left( \frac{n}{R}\right)^2 + \left(\frac{m\pi}{L} \right)^2 \right)^2 -\frac{2}{R^2}\left( \frac{n}{R}\right)^2 + \frac{1}{R^4}\right],
	\]
	where $c$ is the normalising constant such that
	\[
		\int_{\mathbb{S}^1 \times \left( -\frac{L}{2},\frac{L}{2}\right)} \cos\left( \frac{m\pi}{L}z\right)\sin(n\theta)\cos\left( \frac{m'\pi}{L}z\right)\sin(n'\theta) \dee z \dee \theta =c \delta_{m,m'} \delta_{n,n'}.
	\]
	We now wish to estimate the summand from below.
	If $\alpha \geq 1$,
	\begin{align*}
	\left(\left( \frac{n}{R}\right)^2 + \left(\frac{m\pi}{L} \right)^2 \right)^2 -\frac{2}{R^2}\left( \frac{n}{R}\right)^2 + \frac{1}{R^4}
	\geq
	\frac{1}{2}	\left(\left( \frac{n}{R}\right)^2 + \left(\frac{m\pi}{L} \right)^2 \right)^2 + \frac{1}{R^4},
	\end{align*}
	therefore
	\[
		a(v,v) \geq \frac{\kappa}{2}\|\Delta_\go v\|_{0,2}^2 + \frac{\kappa}{R^4}\|v\|_{0,2}^2.
	\]
	For $\alpha<1$ we see that
	\begin{align*}
	\left(\left( \frac{n}{R}\right)^2 + \left(\frac{m\pi}{L} \right)^2 \right)^2 -\frac{2}{R^2}\left( \frac{n}{R}\right)^2 + \frac{1}{R^4}
	\geq&\,
	\alpha^2 \left( \frac{n}{R}\right)^4 + \left(\frac{m\pi}{L} \right)^4 + 2\left( \frac{n}{R}\right)^2\left(\left( \frac{m\pi}{L}\right)^2 - \frac{\alpha^2}{R^2} \right) + \frac{\alpha^2}{R^4}
	\\
	\geq&\,
	\frac{\alpha^2}{2} \left( \left( \frac{n}{R}\right)^2 + \left(\frac{m\pi}{L} \right)^2\right)^2  + \frac{\alpha^2}{R^4}.
	\end{align*}
	This has therefore shown that
	\[
		a(v,v) \geq \frac{\kappa\alpha^2}{2}\|\Delta_\go v\|_{0,2}^2 + \frac{\kappa \alpha^2}{R^4}\|v\|_{0,2}^2.
	\]
\end{proof}

\begin{remark}
Notice that the coercivity constant $c(\alpha)$ depends on the aspect ratio $\alpha$ with $c(\alpha) \to 0$ when $\alpha\to 0$.
It is possible to see that this decay is in some sense sharp.
Fix $R = 1$, $\kappa = 1$ and select $u\circ X =  \frac{1}{\sqrt{c}} \cos\left(\frac{\pi}{L} z\right)\sin(\theta)$, where $c$ is a normalising factor, then it is seen that $u \in H^2(\go) \cap H_0^1(\go)$ satisfies
\[
    a(u,u) = \alpha^4 + 2\alpha^2.
\]
which indeed decays as $\alpha\to 0$.
\end{remark}

We now have the following existence and regularity result.
\begin{theorem}\label{thm:regularity}
	Let $c_0 \in L^2(\go)$.
	There exists a unique solution $u \in U_C$ to Problem \ref{prob:ClampedBoudnary}.
	If $c_0 \in H^2(\go)$ and there is $q>2$ such that $f \in \left( W^{1,q}_0(\go) \right)^*$, then it holds that $u\in W^{3,p}(\go)$ where $p<2$ satisfies $\frac{1}{p} + \frac{1}{q} = 1$.
	Further to this, if $c_0 \in H^2(\go)$ and $f \in L^2(\go)$ it holds that $u \in H^4(\go)$.
\end{theorem}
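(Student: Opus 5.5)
Existence and uniqueness follow from Lax--Milgram on the closed subspace $U_C \subset H^2_0(\go)$. The space $U_C$ is closed in $H^2(\go)$, being the kernel of the continuous functional $v\mapsto \int_\go v \dee\go$ on $H^2_0(\go)$, so it is a Hilbert space with the $\|\cdot\|_{2,2}$ inner product. By Corollary \ref{cor:equivNorms} this norm is equivalent to the standard one there. Boundedness of $a$ is immediate from Cauchy--Schwarz, since $|\tau\cdot\nabla_\go v|\le|\nabla_\go v|$. Coercivity on $U_C\subset H^2(\go)\cap H^1_0(\go)$ is Theorem \ref{thm:EnergyCoercive}. For the right-hand side, $\ell$ is bounded since $|\langle \ell,v\rangle|\le \kappa\|c_0\|_{L^2}\|\Delta_\go v\|_{L^2}$. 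By Assumption \ref{ass:FGo0}, $f$ is bounded on $H^2(\go)\cap H^1_0(\go)\supset U_C$. Lax--Milgram then gives a unique $u$, and symmetry of $a$ gives the equivalent minimisation formulation.

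For regularity I first recover the Lagrange multiplier, so that the equation holds for all test functions in $H^2_0(\go)$. Fix $\phi\in C_c^\infty(\go)$ with $\int_\go\phi\dee\go=1$. For $v\in H^2_0(\go)$, the function $v-(\int_\go v)\phi$ lies in $U_C$. Hence $a(u,v)=\langle f+\ell,v\rangle-\frac{\mu}{R}\int_\go v\dee\go$ for a suitable $\mu\in\R$, which is the weak form of \eqref{eq:ClampedPDEForMinimisation}. When $c_0\in H^2$, integrating by parts twice in $\ell$ turns it into the functional $-\kappa\Delta_\go c_0\in L^2$; the boundary terms vanish for $v\in H^2_0$. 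Then I rewrite the problem as a clamped biharmonic problem
\[
\kappa\Delta_\go^2 u = F := f-\kappa\Delta_\go c_0-\tfrac{\mu}{R} - \tfrac{2\kappa}{R^2}\Div_\go(\tau\tau^T\nabla_\go u)-\tfrac{\kappa}{R^4}u .
\]
Since $u\in H^2$, the lower order terms lie in $H^1\subset L^2$, so $F$ has the same regularity as $f$ modulo $L^2$.

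Next I use the local isometry of $\go$ to the flat cylinder $\mathbb S^1\times(-L/2,L/2)$. In the coordinates $X$, $\Delta_\go=R^{-2}\partial_\theta^2+\partial_z^2$ has constant coefficients and the domain is periodic in $\theta$. The boundary consists of two flat circles, so there are no corners. Standard elliptic regularity for the Dirichlet bilaplacian (Agmon--Douglis--Nirenberg, or the periodic-in-$\theta$ reflection/Fourier approach) then gives the shift estimate $\|u\|_{W^{k+4-s,p}}\lesssim\|F\|_{W^{k-s,p}}$. With $f\in L^2$ this is $H^4$, giving the last claim. For the $W^{3,p}$ claim, note that $f\in(W^{1,q}_0)^*=W^{-1,p}$ with $\frac1p+\frac1q=1$. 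The $L^p$ theory for the clamped biharmonic problem with right-hand side in $W^{-1,p}$ gives $u\in W^{3,p}$. Since $p<2$, the terms $L^2\subset W^{-1,p}$ are harmless.

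The main obstacle is justifying the $W^{-1,p}\to W^{3,p}$ shift for $p<2$ on the cylinder. I would handle it by localising with a cutoff. Away from the boundary, I cover by charts to the plane and apply interior $L^p$ estimates. Near each boundary circle, I use the half-cylinder with Fourier decomposition in $\theta$, or simply cite the ADN estimates for smooth domains, which transfer because the geometry is flat. The commutator terms are lower order and controlled by the already known $H^2$ regularity.
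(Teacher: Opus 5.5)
Your proposal is correct and follows essentially the paper's route: Lax--Milgram on $U_C$ using the coercivity of Theorem~\ref{thm:EnergyCoercive}, then standard clamped-biharmonic regularity. The paper simply cites an adaptation of \cite[Proposition B.2]{EllHer21} for the $W^{3,p}$ claim and \cite[Theorem 2.20]{GazGruSwe10} for $H^4$; your recovery of the multiplier $\mu$ and your treatment of the anisotropic and zeroth-order terms as right-hand-side perturbations supply details the paper leaves implicit. One small slip: for $u\in H^2(\go)$ the term $\Div_\go(\tau\tau^T\nabla_\go u)$ (in the coordinates $X$, $R^{-2}\partial_\theta^2$ applied to $u\circ X$) lies only in $L^2(\go)$, not $H^1(\go)$, but $L^2$ is all your argument uses.
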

\begin{proof}
We see that $a$ is bounded, coercive and bilinear.
By Lax-Milgram the existence of a solution follows.
The first regularity result is effectively a minor adaptation of Proposition B.2 of \cite{EllHer21}.
The second regularity result follows from the smoothness of the boundary and standard Biharmonic regularity theory, for example \cite[Theorem 2.20]{GazGruSwe10}.
\end{proof}
Let us note that the assumption that $f \in \left(W^{1,q}_0(\go)\right)^*$ for $q>2$ allows for the case that $f$ is a (signed) measure, i.e.~in the dual of continuous functions.

\section{Example causes of small deformations}\label{sec:ExampleForcing}
We now provide different examples of the causes of small deformations and related calculations.
\subsection{Point forces}\label{sec:pointForce}
Here, one might be interested in the case that a point force is being applied to the membrane, this may be due to, for example, optical tweezers \cite{Nus17}.
If one considers $\mathcal{F}$ to be given by
\begin{equation}
    \mathcal{F} (\Gamma) = -\sum_{i=1}^N \beta_i d_\Gamma (\XX_i),
\end{equation}
where $d_\Gamma$ is the oriented distance function for $\Gamma$ and $N \in \mathbb{N}$, $\beta \in \R^N$, and $\XX_i \in \go$ for $i = 1,\ldots, N$ are given data.
Notice that when evaluating $\mathcal{F}$ at $\Gamma(\rho u)$, then up to leading order $\mathcal{F}(\Gamma(\rho u)) \approx \sum_{i=1}^N \beta_i \rho u(\XX_i)$.
It is then possible to calculate $f = \sum_{i = 1}^N \beta_i\delta_{\XX_i}$ where $\delta_X$ is the Dirac delta functional at $X\in \go$.
Point forces have been considered in \cite{EllFriHob17} on a sphere, where one may also consider the problem of optimising the location of $\XX_i$, $i=1,\dots,N$, see \cite[Proposition 5.1]{EllFriHob17}.
We remark that for our situation with a two dimensional surface, this choice of $f$ satisfies $f \in \left( W^{1,q}_0(\go) \right)^*$ for any $q > 2$.

\subsection{Point constraints}\label{sec:examples:pointConstraints}
One might be interested that a protein is attached to a biomembrane at a one or more points, inducing point constraints; this is discussed in \cite{GraKie17} and \cite{EllHer21}, as well as associated membrane-mediated interactions between groups of points.
Mathematically, point constraints were incorporated through a penalty in Problem 5.3 of \cite{EllFriHob17} and the numerical analysis of such a problem was considered within \cite{Her20,EllHer20}.
Indeed, the penalisation of point constraints may be considered a model in its own right, this corresponds to a situation whereby particles are attached to the membrane via springs, however we do not discuss it here.
In order to include point constraints into the full non-linear model without using a penalty approach, we consider the Lagrangian
\[
    \mathcal{L}_\rho^{\rm points}(u,\mu,\lambda) = \mathcal{L}_\rho (u,\mu) + \sum_{i=1}^N (\beta_i + \rho \lambda_i) d_{\Gamma_\rho } (\XX_i(\rho)),
\]
where
\[
    \XX_i(\rho) := \XX_i + \rho Z_i \nu(\XX_i)\mbox{ for }i=1,\dots,N
\]
is a collection of points which are sufficiently near $\go$ and $\beta \in \R^N$.
One may see that the extra term we have added onto $\mathcal{L}_\rho$ corresponds to the Lagrange multiplier of the constraint that, for a critical $u$, $\XX_i(\rho) \in \Gamma_\rho$ for $i = 1,\dots,N$.

When proceeding as in Lemma \ref{lem:FirstDerivative}, calculating the first derivative of this extra term, one may see that, similarly to \cite[Problem 5.3]{EllFriHob17},
\begin{equation}
	\left.\frac{\ud}{\ud s} \right|_{s=0} \left( (\beta_i + s \lambda_i) d_{\Gamma_s}(\XX_i(s)) \right) = \beta_i (Z_i - v(\XX_i)).
\end{equation}
To ensure that the first order terms are vanishing, one expands around $\beta = 0$, so that
\[
	\left. \frac{\ud \mathcal{L}_s ^{\rm points}\left(u,\mu,\lambda \right)}{\ud s} \right|_{s=0} = -\frac{\kappa}{R}\int_\go c_0 \!\! \dee \go.
\]

By examining the second derivative,
\begin{equation}
	\left.\frac{\ud^2}{\ud s^2}\right|_{s=0} \left(  s \lambda_i d_{\Gamma_s}(\XX_i(s)) \right) = 2 \lambda_i (Z_i - u\XX_i)),
\end{equation}
we see that the quadratic energy $J_{\rm points}$ associated to the Lagrangian $\mathcal{L}_{\rm points}$ is $J$.
That is to say there is no contribution to the small deformation energy $J$ from point constraints.
There is however a change in the quadratic Lagrangian $L_{\rm points}$.
This change in the Lagrangian means that any critical point satisfies $u(\XX_i)=Z_i$ for $i=1,\dots,N$.

The following existence result considers point constraints near the surface.
The existence follows by coercivity shown in Theorem \ref{thm:EnergyCoercive}, and the regularity follows from the regularity shown from Theorem \ref{thm:regularity}.
\begin{corollary}\label{cor:PointConstraint}
	Let $\{\XX_i\}_{i=1}^N \subset \go$ be a pairwise disjoint collection of points, let $\{Z_i\}_{i=1}^N\subset\R$.
	There is unique $u \in U_C$ such that $u(\XX_i) = Z_i$ for $i=1,\dots,N$ and
	\[
		J(u) - \langle f+\ell, u\rangle = \inf \{ J(v) - \langle f+\ell, v\rangle : v \in U_C ,\, v(\XX_i) = Z_i,\, i=1,\dots,N\}.
	\]
	If $c_0 \in H^2(\go)$ and $f \in \left( W^{1,q}_0(\go)\right)^*$ for some $q>2$, it further holds that $u \in W^{3,p}(\go)$ for $p<2$ satisfying $\frac{1}{p}+ \frac{1}{q} = 1$.
\end{corollary}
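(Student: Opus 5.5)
The plan is to treat this as a minimisation of a coercive quadratic functional over a closed affine subspace of $U_C$, and then to recover the regularity from Theorem \ref{thm:regularity} by turning the point constraints into point forces.

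\textbf{Well-posedness of the constraints.} Since $\go$ is two-dimensional, the Sobolev embedding $H^2(\go)\hookrightarrow C(\overline{\go})$ holds. Hence each point evaluation $v\mapsto v(\XX_i)$ is a bounded linear functional on $U_C$. The constraint set
\[
K:=\{v\in U_C : v(\XX_i)=Z_i,\ i=1,\dots,N\}
\]
is therefore a closed affine subspace of $U_C$.

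\textbf{Nonemptiness.} I would construct an element of $K$ explicitly. Since the $\XX_i$ are distinct interior points, choose $\delta>0$ smaller than half their mutual distances and their distance to $\partial\go$. Take smooth bumps $\phi_i$ supported in $B_\delta(\XX_i)$ with $\phi_i(\XX_i)=1$, and one further bump $\psi$ supported away from all the $\XX_i$ and from $\partial\go$ with $\int_\go\psi\dee\go=1$. Then
\[
w:=\sum_i Z_i\phi_i-\Big(\sum_i Z_i\int_\go\phi_i\dee\go\Big)\psi
\]
lies in $K$. I expect this to be the only point needing care, and it is straightforward.

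\textbf{Existence and uniqueness.} Write $v=w+\eta$ with $\eta$ in the closed subspace $U_0:=\{\eta\in U_C:\eta(\XX_i)=0\}$. Minimising $J(v)-\langle f+\ell,v\rangle$ over $K$ is then equivalent to minimising
\[
\tfrac12 a(\eta,\eta)-\langle f+\ell,\eta\rangle+a(w,\eta)+\text{const}
\]
over $U_0$. The form $a$ is bounded, and by Theorem \ref{thm:EnergyCoercive} it is coercive on $U_0\subset H^2(\go)\cap H^1_0(\go)$ in $\|\cdot\|_{2,2}$, which is a norm equivalent to the $H^2$ norm by Corollary \ref{cor:equivNorms}. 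The right-hand side is bounded on $U_0$: $f$ is bounded on $H^2\cap H^1_0$ by Assumption \ref{ass:FGo0}, and $\ell$ is bounded because $c_0\in L^2(\go)$. Lax--Milgram therefore gives a unique minimiser $\eta$. Strict convexity of the energy on the affine set gives uniqueness of $u=w+\eta$.

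\textbf{Regularity.} Since $U_0$ has codimension at most $N$ in $U_C$ and the point evaluations are independent on $U_C$ (by the bump construction above), the Euler--Lagrange equation yields multipliers $\lambda_i\in\R$ such that
\[
a(u,v)=\Big\langle f+\ell+\sum_i\lambda_i\delta_{\XX_i},v\Big\rangle\quad\forall v\in U_C.
\]
Thus $u$ solves Problem \ref{prob:ClampedBoudnary} with the modified forcing $\tilde f=f+\sum_i\lambda_i\delta_{\XX_i}$. As noted in Section \ref{sec:pointForce}, each $\delta_{\XX_i}$ lies in $(W^{1,q}_0(\go))^*$ for every $q>2$, so $\tilde f\in (W^{1,q}_0(\go))^*$. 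Theorem \ref{thm:regularity}, applied with $\tilde f$ and $c_0\in H^2(\go)$, then gives $u\in W^{3,p}(\go)$ with $\frac1p+\frac1q=1$.
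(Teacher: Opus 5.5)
Your proposal is correct and takes the same route as the paper, whose proof is only a one-line citation: Theorem \ref{thm:EnergyCoercive} (coercivity) for existence and uniqueness, and Theorem \ref{thm:regularity} for regularity. You supply the details the paper leaves implicit: nonemptiness of the affine constraint set, and the conversion of the point constraints into Dirac forcing through the multipliers $\lambda_i$, which correspond to the Lagrange multipliers in the paper's Lagrangian $\mathcal{L}_\rho^{\rm points}$.
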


\subsection{Forcing along a curve}\label{sec:forceOnALine}
In Section \ref{sec:pointForce}, where point point forcing was considered, there is the possibility of many points.
It might be the case that many points are more efficiently modelled as one or multiple curves.
This modelling approach may be relevant for protein assemblies that organise along curves on the membrane, such as BAR-domain scaffolds or dynamin collars, where the collective mechanical effect is distributed along a one-dimensional structure embedded in the surface.
Such protein rings have been described on tubules, see \cite[Section 4]{Gov18}, for example.

For ease of exposition, let us consider only a single curve.
In such a case, the sum becomes an integral and one considers
\begin{equation}
	\mathcal{F}(\Gamma) = -\int_\gamma d(\Gamma,\cdot) w \dee \gamma,
\end{equation}
where $d$ is again the oriented distance function, $\gamma\subset \go$ is some sufficiently smooth curve, and $w\colon \gamma \to \R$ is a sufficiently smooth \emph{weight} function.
With this in mind, one may calculate
\[
  \langle f, v \rangle = \int_\gamma v w \dee \gamma.
\]
\begin{remark}\label{rem:regularityOfLineForcing}
Let us note that when $\gamma$ and $w$ are appropriately smooth, one can apply the regularity mentioned in the second part of Theorem \ref{thm:regularity}.
Depending on the smoothness of $w$ and $\gamma$, the solution may be more regular, since the force $f$ may more regular than the case which admits Dirac masses.
\end{remark}

\subsection{Phase-field forcing}
Motivated in part by the work of \cite{Hat20}, we consider the effect of the concentration of saturated lipids which induce a spontaneous curvature, $H_s = \rho c_0\circ \pi$, where $c_0$ is a function of a concentration $\phi\colon \go \to \R$.
It is observed that certain lipid mixtures undergo phase separation, forming so-called lipid rafts \cite{SimIko97}.
This may be modelled by coupling the elastic energy to a concentration dependent quantity, which first appears in \cite{Lei86}, and has been considered in many subsequent works, e.g., \cite{BraLusSte20, EllSti10-A,EllSti13,BarGarNur18}.
In the settings of small deformations around a sphere, such models have been investigated in \cite{EllHatHer20,EllHat21,EllHatSti20, CaeEllGra26}.
Let us also note the works \cite{Gov18, SchGov08} which also consider a concentration dependent forcing.

The model considered in the works \cite{EllHatHer20,EllHat21,EllHatSti20,CaeEllGra26} supposes that there should be some phase separation which involves adding a Ginzburg-Landau energy.
Because of the scaling we will shortly mention, let us add the extra term to the energy, rather than considering it as part of $\mathcal{F}$.
That is to say, consider the Lagrangian
\[
	\mathcal{L}^{\rm pf}_\rho(u,\mu, \phi)
	:=
	\mathcal{L}_\rho(u, \mu)
	+
    \rho^2 b \int_{\Gamma_\rho} \left( \frac{\epsilon}{2}|\nabla_{\Gamma_\rho} \phi|^2\circ \pi + \frac{1}{\epsilon} W(\phi)\circ \pi \right) \!\! \dee \Gamma.
\]
where $b>0$ is a line tension term, $\epsilon>0$ is a small parameter and $W \colon \R \to \R$ is a double-well function.
We note that this has a $\rho$ term included so that the forcing scales like $\rho^2$ which is motivated by \cite{KuzAkiChi05} where, for a monolayer, the authors calculate that the line tension between phases (so-called raft and non-raft regions) scales quadratically to the so-called hydrophobic mismatch which is $\rho$ in this case.
Calculations within \cite{EllHat21} show that these term are approximated by
\[
	\rho^2 b \int_{\Gamma_\rho} \left( \frac{\epsilon}{2}|\nabla_{\Gamma_\rho} \phi|^2\circ \pi + \frac{1}{\epsilon} W(\phi)\circ \pi \right)\!\! \dee \Gamma
	=
	\rho^2 b \int_{\go} \left( \frac{\epsilon}{2}|\nabla_\go \phi|^2 + \frac{1}{\epsilon} W(\phi) \right)\!\! \dee \go
    + O(\rho^3).
\]
This leads to the following small deformations energy
\begin{equation}
\begin{split}
	J_{\rm pf}(v,\phi) :=& \frac{\kappa}{2} \int_\go \left( (\Delta_\go v)^2 - \frac{2}{R^2}(\tau\cdot \nabla_\go v)^2 + \frac{1}{R^4} v^2 \right)\!\! \dee \go
	\\
	&+
	b \int_\go \left( \frac{\epsilon}{2}|\nabla_\go \phi|^2 + \frac{1}{\epsilon}W(\phi) \right) \!\! \dee \go
	+
	\kappa \int_\go \left( \Delta_\go v c_0(\phi) + \frac{c_0^2(\phi) }{2} \right)\!\! \dee \go
	- \mathcal{F}'(\go)[v\nu].
\end{split}
\end{equation}
For a fixed $\phi$ such that $c_0(\phi)$ is sufficiently regular, it is possible to verify that there exist minimisers of the map $v \mapsto J_{\rm pf}(v,\phi)$ over $v \in U_C$.
The interesting aspect of adding this phase field energy is to minimise the coupled energy over some appropriate function space such such that $\int_\go c_0(\phi)\dee\go$ is constant -- where we recall that this is a condition required for the first order terms to be neglected.
Such minimisation may be done with gradient flow dynamics as in \cite{EllHer20,EllHat21,EllHatSti20}; if $W$ is non-smooth, say the obstacle potential \cite{BloEll91,BloEll92,OonPur88} and if one is not concerned with dynamics and seeks only to find some form of minimiser, one may wish to use the so-called \emph{variable metric projection theorem} (VMPT) approach \cite{BlaRup17}.

\section{Numerical examples}\label{sec:numExp}
Here we consider a few numerical examples of the forcing we mentioned.
We do not discuss the numerical analysis of the schemes used, this is left to future work; we do however, verify that convergence is obtained for a problem with a point force.
The experiments are conducted making use of the DUNE framework~\cite{BasBlaDed21,DUNE}, in particular the Python bindings of the DUNE-FEM module~\cite{DUNE-python},
the results are visualised using Paraview \cite{Paraview}.

\subsection{Numerical scheme for the fourth order PDE on a surface}
We are required to introduce number of standard notions from the theory of surface finite elements.
We refer the reader to \cite{EllRan21} for further details.
The tube will be approximated by a triangulated surface $\goh$.
All vertices of this triangulated surface should lie on $\go$.
Let us note that the boundary will be approximated, that is to say $\partial \goh \neq \partial \go$, however, we will have that the projection of the boundary satisfies $\pi \left( \partial \goh\right) = \partial \go$.
We denote the triangulation by $\mathcal{T}_h$, where $h = \sup_{T \in \mathcal{T}_h} {\rm diam}(T)$.
On this triangulated surface we consider the following family of spaces, for $p \in \mathbb{N}$ with $p\geq 2$:
\begin{equation}
	V_h^p : = \{ v_h \in C^0_0(\goh) : v|_T \in \mathbb{P}^p(T),\, T \in \mathcal{T}_h\},
\end{equation}
where $\mathbb{P}^p(T)$ denotes polynomials of order less than $p$ on $T$.
This is a $H^1$ conforming space but not a $H^2$ conforming space.
It is worth mentioning that the question of a $H^2$, or $C^1$, conforming space is particularly challenging on a triangulated surface, as the discrete surface is not itself a {$C^1$} approximation of the surface.

Our strategy for the discretisation will utilise the so-called interior penalty $C^0$ method \cite{BreSun05}.
To do this on surfaces, we refer the reader to \cite{LarLar17}, where a theory is presented in the case of quadratic elements.
We now provide a few necessary details relating to the method before giving a discrete energy.
On an edge $e := \partial T^+ \cap \partial T^-$, the jump and average are given at $x \in e$ by
\begin{equation}\begin{split}
    \jump{u_h} (x) :=&\ \lim_{s\to 0^+} \left( u_h(x- s \conormal_{T^+}) - u_h(x- s \conormal_{T^-})  \right),
    \\
    \avg{u_h} (x) :=&\ \lim_{s\to 0^+} \frac{1}{2} \left( u_h(x- s \conormal_{ T^+}) + u_h(x- s \conormal_{ T^-})  \right),
\end{split}\end{equation}
where $\conormal_{T}$ is the unit outwards co-normal to $T$.
For boundary edges, that is $e:= \partial T \cap \partial \goh$,
these are provided by
\begin{equation}\begin{split}
    \jump{u_h}(x) :=&\ \lim_{s\to 0^+}  u(x- s \conormal_T),
    \\
    \avg{u_h}(x) :=&\ \lim_{s\to 0^+} u(x- s \conormal_T).
\end{split}\end{equation}

Let $\mathcal{E}_h$ denote the set of edges of $\mathcal{T}_h$.
The discrete $H^2$-type norm is given by
\begin{equation}
    |\!|\!| u_h |\!|\!|_{\goh}
    :=
    \left(
        \sum_{T \in \mathcal{T}_h} \| \Delta_\goh u_h\|_{L^2(\goh)}^2 + \sum_{e \in \mathcal{E}_h} |e|^{-1} \| \jump{ \conormal \cdot \nabla_\goh u_h } \|_{L^2(e)}^2
        +
        \|u_h\|_{H^1(\goh)}^2
        \right)^{\frac{1}{2}}
\end{equation}
for any $u_h \in V_h^p$, where discrete $H^1$ and $L^2$ norms are standard since $V_h^p$ is $H^1$ conforming.

The discrete form of $a$, $a_h \colon V_h^p \times V_h^p \to \R$ is given by
\begin{equation}\begin{split}
    a_h(u_h,v_h)
    :=&\
    \sum_{T \in \mathcal{T} } \int_T \left( \Delta_\goh u_h \Delta_\goh v_h - \frac{2}{R^2} \left(\tau \cdot \nabla_\goh u_h \right) \left(\tau \cdot \nabla_\goh v_h \right) + \frac{1}{R^4} u v\right) \dee x
    \\
    &+
    \sum_{e \in \mathbb{E}} \int_{e} \left( -\avg{\Delta_\goh u_h} \jump{\nabla_\goh v_h \cdot \conormal } -\avg{\Delta_\goh v_h} \jump{\nabla_\goh u_h \cdot \conormal } + \beta \jump{\nabla_\goh u_h \cdot \conormal} \ \jump{\nabla_\goh v_h \cdot \conormal} \right)\dee s,
\end{split}\end{equation}
where $\beta \colon \bigcup_{e \in \mathcal{E}_h} e \to \R$ is positive and plays the role of a penalty.
In this work we take it as $\beta|_{e} := 30 p^2 |e|^{-1}$.
In order to handle the constraint on the integral of the height function, we will make use of a Lagrange multiplier.

\subsection{Verification of numerical scheme}\label{sec:numericalVerification}
Here, we verify that the numerical scheme is converging as expected for an example with point forcing; this is chosen due to its low regularity, hence it is expected to have the worst approximation.
Such an example is resulting from a manufactured example, whereby we choose the exact solution $u$ by its parametrisation to make use of software which provides derivatives.
We set
\begin{equation*}
    u\circ X(\theta,z)
    =
    \left( C + (\theta^2 + z^2)\log(\theta^2 + z^2) \right) \frac{(\theta^2 - \pi^2)^4}{\pi^8} \left(z^2 - \left(\frac{L}{2}\right)^2\right)^2,
\end{equation*}
where $C = -\frac{133693875}{8192 \pi^9}$ is a constant which ensures that $\int_\go u \dee \go = 0$.
Here, we comment that $\frac{1}{16 \pi} (\theta^2 + z^2)\log(\theta^2 + z^2)$ is the fundamental solution of the bi-laplace problem on $\R^2$, however in a periodic setting, it has a discontinuous derivative at $\theta = \pm \pi$, which is dealt with via $\frac{(\theta^2 - \pi^2)^4}{\pi^8}$.
Furthermore, $(z^2 - (L/2)^2)^2$ is included to correct for the clamped boundary conditions at $z = \pm \frac{L}{2}$.
This leads to
\begin{equation*}
    -\Delta_\go^2 u + \frac{2}{R^2}\Divg(\tau \otimes \tau \nabla_\go u) + \frac{1}{R^4}u
    =
    16 \pi \delta_{ (0,1,0) } + g
\end{equation*}
where $g\in L^2(\Gamma)$ may be given by a lengthy calculation.
The results of \cite{LarLar17} appears to make use of $H^3$ regularity of the solution; here we have that $u \in W^{3,p}(\go)$ for any $p \in [1,2)$, as such, this is borderline out of scope for such a convergence result.
We expect that one can extend the theory to cover such a case, but we do not investigate it in this work.
Using the above as data for a numerical experiment with $L = 2$, $R=1$, we obtain the results in Figure \ref{fig:convergenceExample}, which show the expected orders of convergence.
\begin{figure}
    \begin{tikzpicture}
        \begin{axis}[xmode = log, ymode = log, log basis x={2}, log basis y={2}, xlabel = $h$, ylabel = {Error}, legend pos=outer north east]
            \addplot table [x=h, y=H2, col sep=comma] {dirac_convergence.csv};
            \addlegendentry{$H^2$ seminorm}
            \addplot table [x=h, y=L2, col sep=comma] {dirac_convergence.csv};
            \addlegendentry{$L^2$ norm}
            \addplot[no marks, dashed] table [x=h, y expr = 4*\thisrow{h}, col sep=comma] {dirac_convergence.csv};
            \addlegendentry{$4 h$}
            \addplot[no marks, dotted] table [x=h, y expr = \thisrow{h}^2, col sep=comma] {dirac_convergence.csv};
            \addlegendentry{$h^2$}
        \end{axis}
    \end{tikzpicture}
    \caption{A plot of the error from the experiment in Section \ref{sec:numericalVerification}.
    We see that one appears to obtain the convergence as in \cite{LarLar17}, despite low regularity data.}\label{fig:convergenceExample}
\end{figure}

\subsection{Point Forcing and constraints}
We now turn to numerical examples with point constraints or point forcing.
\subsubsection{Forcing}\label{sec:exp:pointForce}
Here we set $L = 6$ and $R = 1$, as well as $f = \sum_{i = -3}^3 (-1)^i \delta_{\XX_i}$ where $\XX_i = (0,1, L i/4)$ for $i = -3,-2,\dots,3$.
This results in the picture in Figure \ref{fig:pointForce}
\begin{figure}
    \center
    \includegraphics[width = .8 \linewidth]{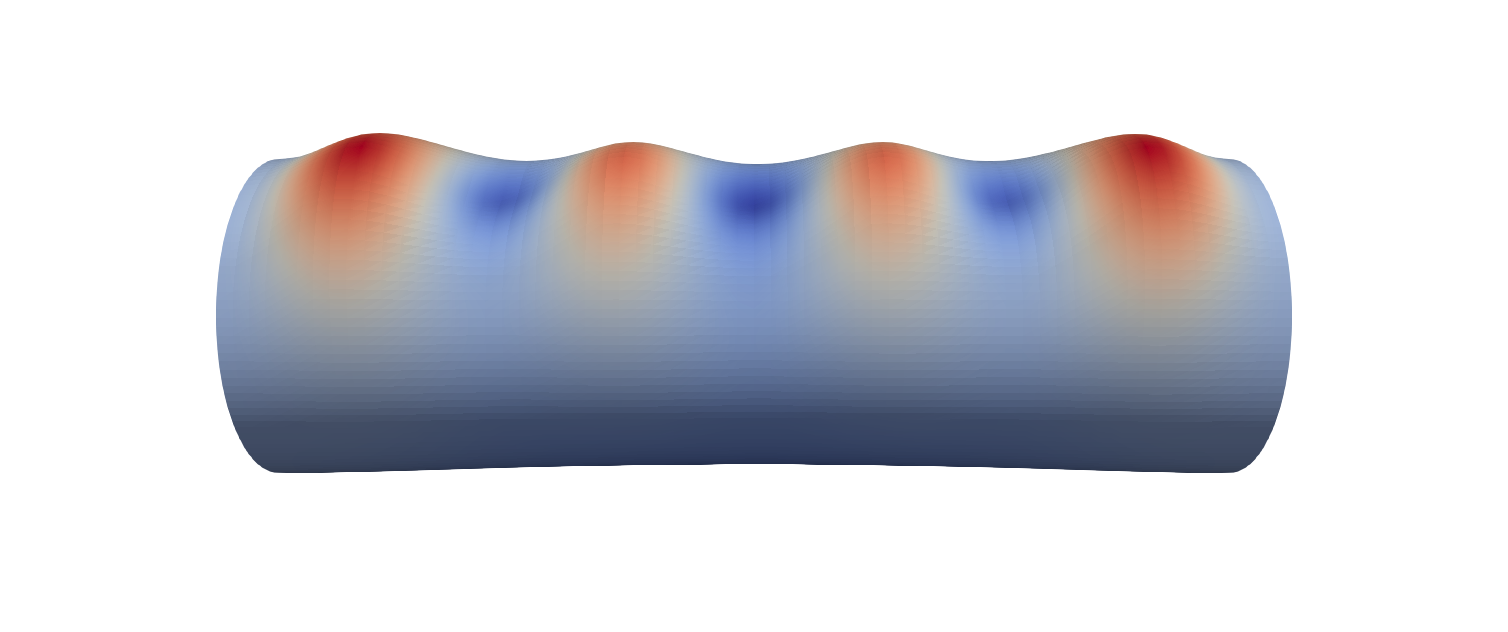}
    \caption{An image showing $\go(\rho u)$, where $u$ solves the point forcing problem outlined in Section \ref{sec:exp:pointForce}.
    For the purposes of visualisation, we set $\rho = 20$.
    Here, the colour corresponds to the value of $u$, with blue being low values and red being high values.}\label{fig:pointForce}
\end{figure}

\subsubsection{Constraints}\label{sec:exp:pointConstraints}
Here we consider the problem outlined in Section \ref{sec:examples:pointConstraints}.
We set $L = 6$ and $R = 1$, as well $f = 0$ and choose the point constraint locations as:
\begin{equation*}
    \{ x \in \go : x_2 \in \{-\frac{L}{6}, \frac{L}{6}\}, (x_1,x_2) \in \{ (-1,0), (1,0), (0,-1), (0,1) \}  \}
\end{equation*}
and set the constraint value constantly as $Z = 1$.
This results in the picture in Figure \ref{fig:pointConstraint}
\begin{figure}
    \center
    \includegraphics[width = .8 \linewidth]{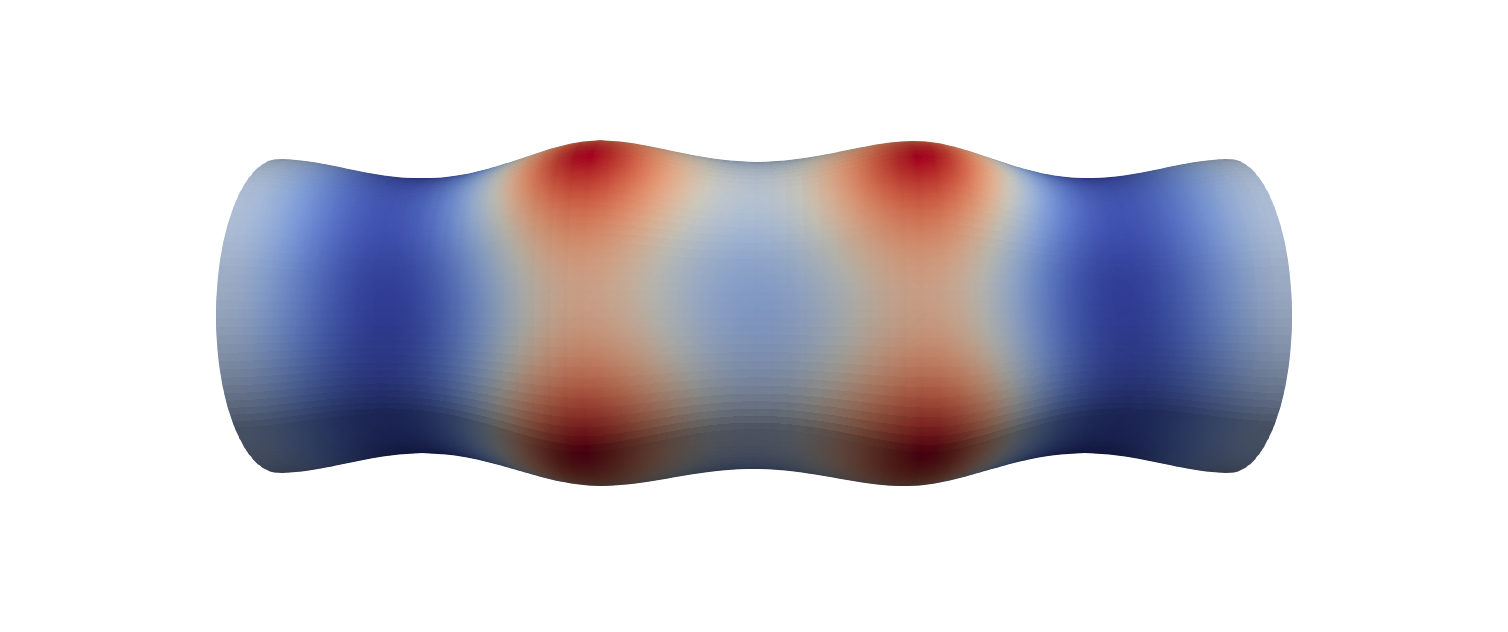}
    \caption{An image showing $\go(\rho u)$, where $u$ solves the point constraint problem outlined in Section \ref{sec:exp:pointConstraints}.
    For the purposes of visualisation, we set $\rho = 0.2$.
    Here, the colour corresponds to the value of $u$, with blue being low values and red being high values.}\label{fig:pointConstraint}
\end{figure}

\subsection{Line forcing}\label{sec:exp:lineForcing}
Here we set $L = 6$ and $R = 1$, and consider $ \gamma = \go \cap \{ x \in \R^3 : (x_2-R)^2 + (x_3^2 + x_1^2)^2 - x_3^2 - 10^{-6} = 0 \}$ and $f$ to be determined by $\langle f, v \rangle = \int_\gamma v \!\!\dee \gamma$.
This results in the picture in Figure \ref{fig:lineForcing}.
\begin{figure}
    \centering
    \includegraphics[width = .8 \linewidth]{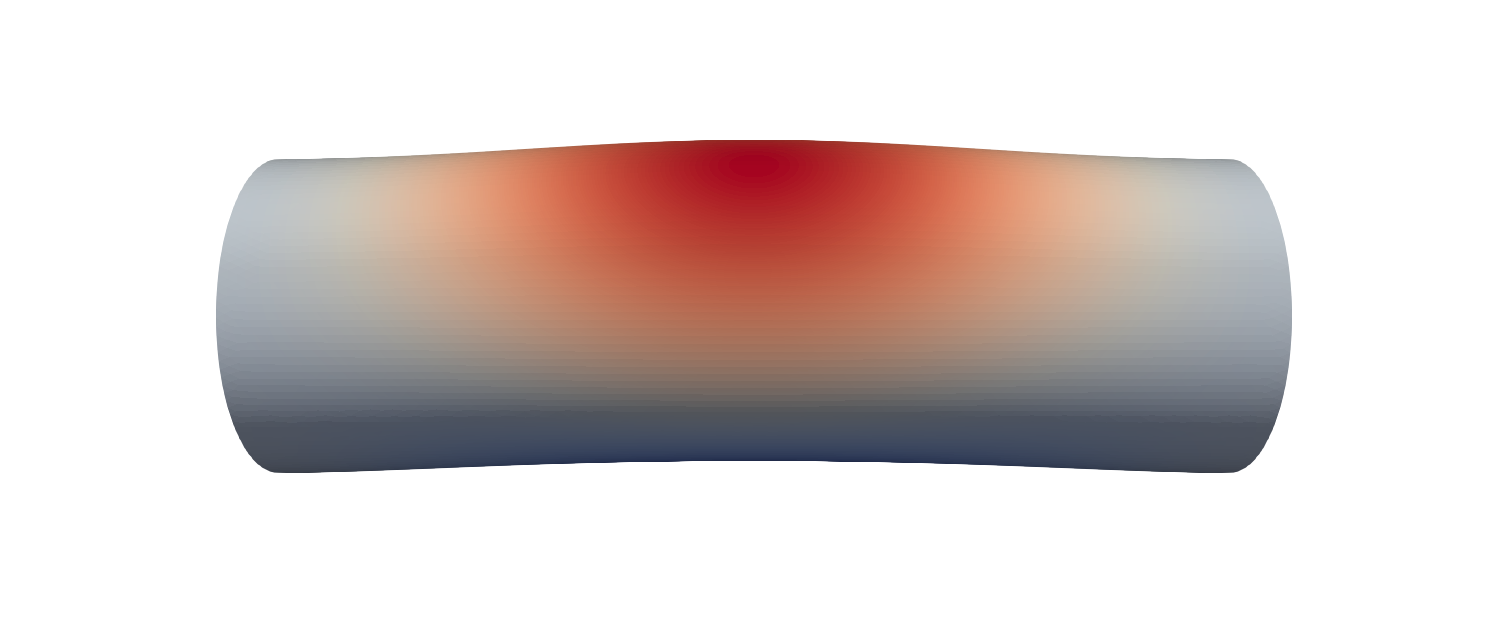}
    \caption{An image showing $\go(\rho u)$, where $u$ solves the line forcing problem outlined in Section \ref{sec:exp:lineForcing}.
    For the purposes of visualisation, we set $\rho = 0.2$.
    Here, the colour corresponds to the value of $u$, with blue being low values and red being high values.}\label{fig:lineForcing}
\end{figure}

\subsection{Phase field}\label{sec:exp:phaseField}
Here we consider that $c_0(\phi) = 20 \phi$ for the unknown phase function $\phi$.
The potential $W$ is given by the obstacle potential,
\begin{equation}
    W(s) =
    \begin{cases}
        1-s^2 & |s|\leq 1,\\
        \infty & |s|>1.
    \end{cases}
\end{equation}

For our numerical experiment, we will consider that $\phi_h \in \Phi_h$, where $\Phi_h$ which is given by
\begin{equation}
    \Phi_h :=
    \left\{
        \varphi_h \in C^0(\goh) : \varphi_h|_T \in \mathbb{P}^1(T) \forall T \in \mathcal{T}_h,\, |\varphi_h| \leq 1,\, \int_\goh c_0(\varphi_h) \dee \goh = 0
    \right\}.
\end{equation}
The (discrete) PDE constrained optimisation problem which we consider is find $\phi_h \in \Phi$ which minimises $\phi_h \mapsto J_{\rm pf}(u_h(\phi_h),\phi_h)$ where $u_h(\phi_h) \in V_h^p$ solves
\begin{equation}
    a_h(u_h,v_h) = \int_\goh \Delta_\goh v_h c_0(\phi_h) \dee \goh \quad \forall v_h \in V_h^p,\, \int_\go v_h \dee \goh = 0, \quad \int_\goh u_h \dee \goh = 0.
\end{equation}
To minimise the map $\phi_h \mapsto J_{\rm pf}(u_h(\phi_h),\phi_h)$, we utilise the aforementioned VMPT \cite{BlaRup17}.
The resulting obstacle problem is solved by use of a primal-dual active set method \cite{BlaGarSar13}.

For the experiment, we choose $L = 6$ and $R = 1$, as well as setting $b = 16$ and $\epsilon = 2^{-3}$.
The minimisation method is started with the initial $\varphi_h^0$ to be the interpolation of $2^{-5} \cos(\frac{k \pi }{L} x_3)$ for a selection of $k$.
While the initial condition is axi-symmetric, the mesh is not, constructed with \texttt{gmsh} \cite{gmsh}.
We perform the VMPT process until $e_k = \varphi_h^{k+1}- \varphi_h^k$ the difference between subsequent iterations satisfy $\epsilon \|\nabla e_k \|_{L^2(\goh)}^2 + \epsilon^2 \|e_k\|_{L^2(\goh)}^2 \leq 10^{-3}$.
In Figure \ref{fig:phaseForce} we see a picture of the numerical example.
\begin{figure}
    \centering
    \includegraphics[width = .45 \linewidth]{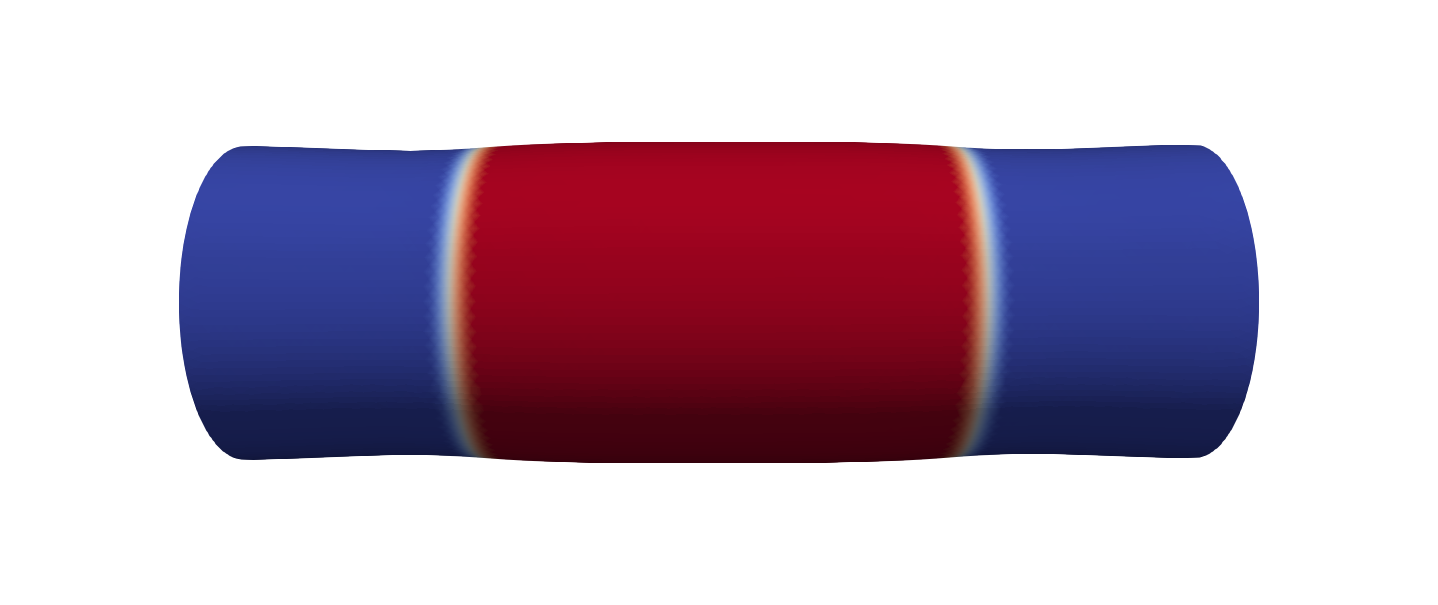} \hfill
    \includegraphics[width = .45 \linewidth]{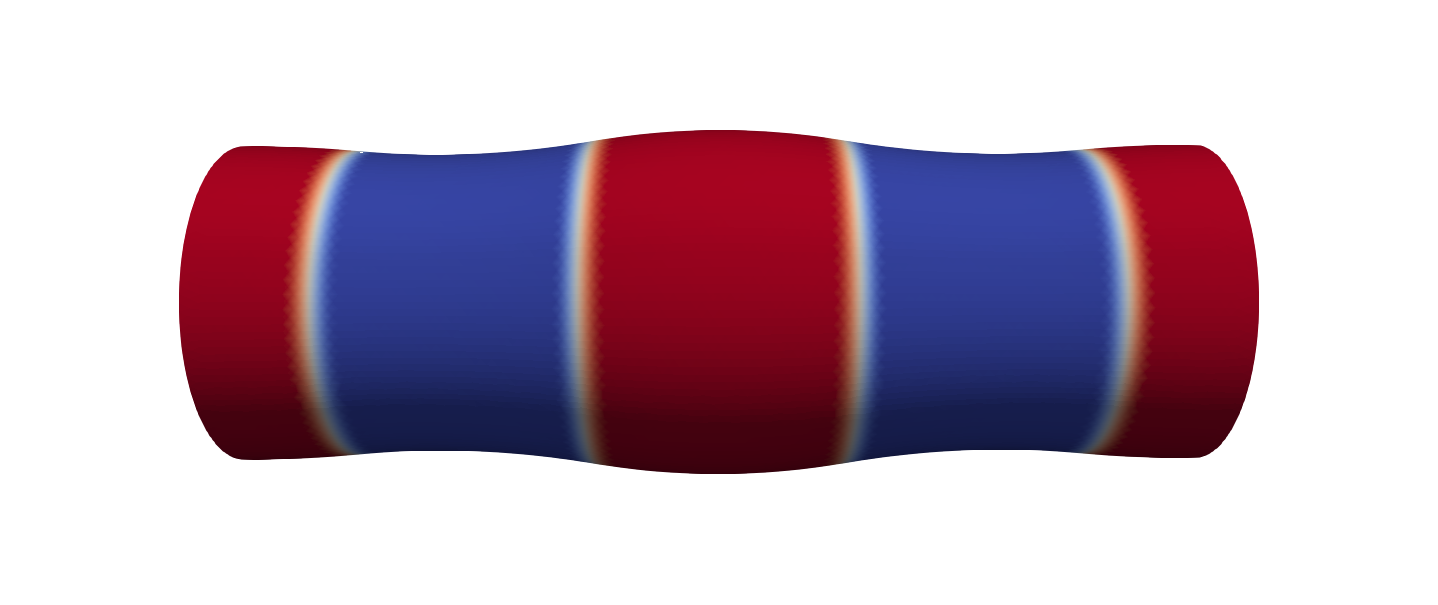} \\
    \includegraphics[width = .45 \linewidth]{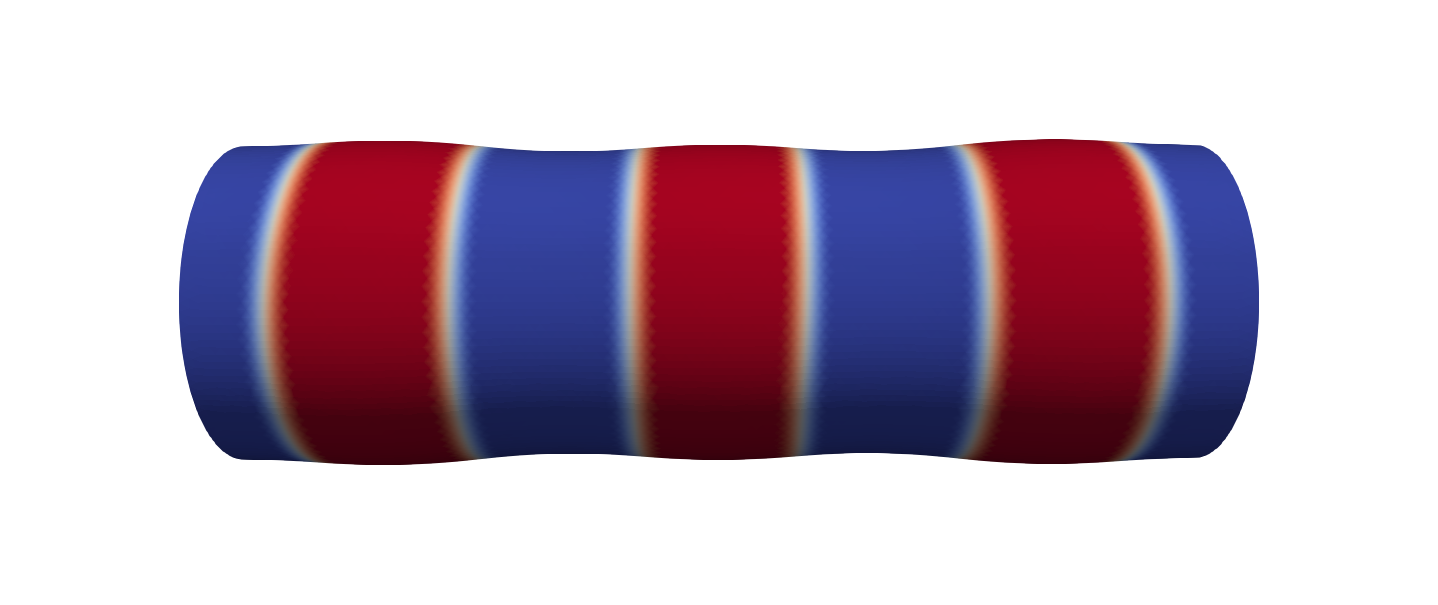} \hfill
    \includegraphics[width = .45 \linewidth]{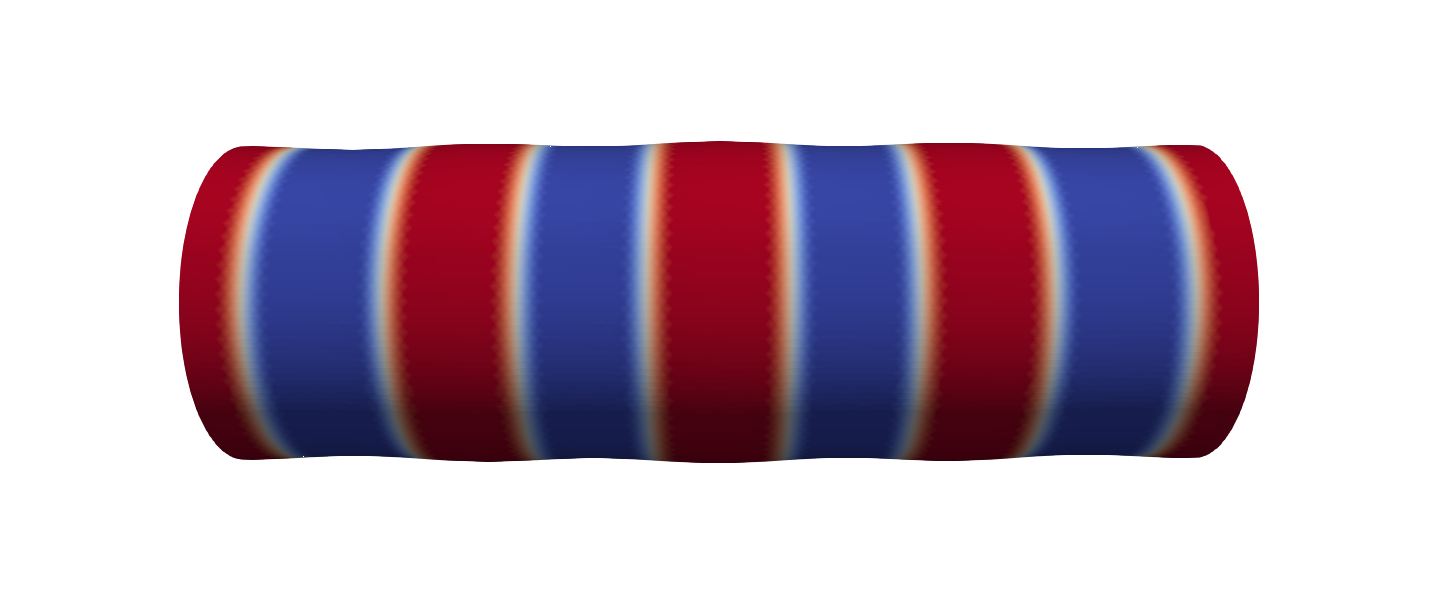}
    \caption{An sequence of images showing $\go(\rho u)$, where $u$ solves the phase field forcing problem outlined in Section \ref{sec:exp:phaseField}.
    From top left to bottom right we show the resulting image with $k = 2, 4, 6, 8$.
    As the number of interfacial regions increases, the energy increases.
    For the purposes of visualisation, we set $\rho = 0.2$.
    Here, unlike the other experiments, the colour corresponds to the value of $\varphi$, with blue corresponding to areas where $\varphi\approx -1$ and red corresponding to areas with $\varphi \approx 1$.}\label{fig:phaseForce}
\end{figure}


\section{Conclusion}\label{sec:Conclusion}
We have constructed a quadratic energy which models a tubular biomembrane which is near a Helfrich-Cylinder and shown well-posedness of associated minimisation problems.
We have considered a number of different forces which would induce a small deformation.
In the appendix we have given a result which shows the standard $H^2$ norm on a surface may be controlled by a more convenient norm which does not contain mixed derivatives.

As an outlook, it may be interesting to apply such tools to surfaces which are near-catenoids, or sections of spheres.
It could be worthwhile to also consider small perturbations of the boundary conditions, e.g., instead of $u|_{\partial\go} = 0$, to have $u|_{\partial \go} = \rho \hat{u}$ for some $\hat{u}$ being sufficiently regular.
Furthermore, it would be of interest to develop numerical estimates for surfaces with boundaries.

\section*{Acknowledgments}
PJH would like to thank the Freie Universit\"at Berlin Research Alumni Program which supported a visit to CG.

\appendix
\section{Equivalence of norms on \texorpdfstring{$H^2(\Gamma)$}{H2Gamma}}\label{sec:appendixA}
We provide the following lemma which is an adaptation of Lemma 3.2 in \cite{DziEll13}.
\begin{lemma}\label{lem:EquivNormDirichlet}
	Let $\Gamma$ be an $n$ dimensional hypersurface with sufficiently smooth boundary.
	For $v \in H^2_0(\Gamma)$, it holds that there is $C>0$ independent of $v$ such that
	\[
		|v|^2_{H^2(\Gamma)} \leq \|\Delta_\Gamma v\|_{L^2(\Gamma)}^2 + C\|\nabla_\Gamma v\|_{L^2(\Gamma)}^2.
	\]
\end{lemma}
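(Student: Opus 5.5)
The plan is to prove the estimate first for $v \in C^\infty_c(\Gamma)$ and then pass to $H^2_0(\Gamma)$ by density. Both sides of the inequality are continuous with respect to the $H^2(\Gamma)$ norm, so the inequality survives the limit. The point to keep in view is that the coefficient in front of $\|\Delta_\Gamma v\|^2_{L^2(\Gamma)}$ must be exactly $1$. Absorbing a mixed term $\int \mathcal{H}\nabla_\Gamma v \cdot D^2_\Gamma v$ by Young's inequality would only give $1+\varepsilon$, so it is not enough. I therefore aim for an \emph{identity} in which all curvature contributions involve first derivatives only. This is the mechanism behind \cite[Lemma 3.2]{DziEll13}, and compact support is what removes the boundary terms.

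\textbf{Step 1: split the extrinsic Hessian.} Write the extrinsic Hessian $D^2_\Gamma v$ as a tangential part plus a normal part. For $v$ smooth, differentiate $\nabla_\Gamma v \cdot \nu = 0$ tangentially. This gives $\nu^T D^2_\Gamma v = -(\mathcal{H}\nabla_\Gamma v)^T$ up to the index convention, while $P_\Gamma D^2_\Gamma v P_\Gamma = \nabla^2 v$ is the symmetric covariant Hessian. Hence
\[
D^2_\Gamma v = \nabla^2 v + \nu \otimes (\text{a vector of size } |\mathcal{H}\nabla_\Gamma v|),
\]
and the two pieces are Frobenius-orthogonal. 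Therefore
\[
|D^2_\Gamma v|^2 = |\nabla^2 v|^2 + |\mathcal{H}\nabla_\Gamma v|^2
\]
pointwise, in either index convention.

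\textbf{Step 2: Bochner identity.} For $v \in C^\infty_c(\Gamma)$, the integrated Bochner formula reads
\[
\int_\Gamma |\nabla^2 v|^2 \dee \Gamma = \int_\Gamma (\Delta_\Gamma v)^2 \dee \Gamma - \int_\Gamma \mathrm{Ric}(\nabla_\Gamma v,\nabla_\Gamma v) \dee \Gamma.
\]
Here no boundary terms appear because $\operatorname{supp} v \Subset \Gamma$. By the Gauss equation, $\mathrm{Ric} = H\mathcal{H} - \mathcal{H}^2$ on tangent vectors. I would verify this directly in the paper's notation, as in \cite{DziEll13}. The procedure is to integrate $\D{i}\D{j}v\,\D{i}\D{j}v$ by parts using the weak-derivative formula, and then commute $\D{i}\D{j}$ via the commutator rule of \cite[Lemma 2.6]{DziEll13}. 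The commutator produces only $\mathcal{H}$, $H$, $\nu$ times first derivatives. The $H\nu$ term from integration by parts drops because $\nu\cdot\nabla_\Gamma v = 0$.

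\textbf{Step 3: combine and bound.} Combining Steps 1 and 2 gives
\[
|v|^2_{H^2(\Gamma)} = \|\Delta_\Gamma v\|^2_{L^2(\Gamma)} + \int_\Gamma \big( |\mathcal{H}\nabla_\Gamma v|^2 - H\,\mathcal{H}\nabla_\Gamma v\cdot \nabla_\Gamma v + |\mathcal{H}\nabla_\Gamma v|^2 \big) \dee \Gamma .
\]
The last integral is bounded by $C\|\nabla_\Gamma v\|^2_{L^2(\Gamma)}$, with $C$ depending only on $\sup_\Gamma|\mathcal{H}|$, which is finite by the assumed regularity of $\Gamma$ up to its boundary.

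\textbf{Main obstacle.} The delicate step is Step 2 carried out in the paper's extrinsic calculus. One has to track the index order of the non-symmetric $D^2_\Gamma$ and check that every curvature term genuinely multiplies two first derivatives. No term of the form (curvature)$\times\nabla_\Gamma v\times D^2_\Gamma v$ may survive, since such a term would spoil the exact coefficient $1$. If a mixed term does appear, I would first rewrite it as $\tfrac12\D{k}|\nabla_\Gamma v|^2$ times curvature, and integrate by parts once more (again with no boundary term). This turns it into curvature derivatives times $|\nabla_\Gamma v|^2$, which uses $\Gamma \in C^3$.
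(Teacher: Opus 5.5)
Your argument is correct and reaches the same identity as the paper, but it is organised differently.

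\textbf{The paper's route.} It stays in extrinsic coordinates throughout. It integrates $\int_\Gamma \D{i}\D{j}v\,\D{i}\D{j}v$ by parts twice for $v$ smooth up to $\partial\Gamma$, and discards the boundary integrals using $\nabla_\Gamma v=0$ on $\partial\Gamma$. It then evaluates the third-order commutator $\D{i}\D{i}\D{j}v-\D{j}\D{i}\D{i}v$ with \cite[Lemma 2.6]{DziEll13}, obtaining $H(\mathcal{H}\nabla_\Gamma v)_j-2(\mathcal{H}^2\nabla_\Gamma v)_j$.

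\textbf{Your route.} You split $D^2_\Gamma v$ Frobenius-orthogonally into the covariant Hessian and a normal piece. In the paper's convention $(D^2_\Gamma v)_{ij}=\D{i}\D{j}v$, this means $\nu^T D^2_\Gamma v=0$ and $D^2_\Gamma v\,\nu=-\mathcal{H}\nabla_\Gamma v$. You then invoke Bochner with $\mathrm{Ric}=H\mathcal{H}-\mathcal{H}^2$. You remove boundary terms by working in $C^\infty_c(\Gamma)$ and using density, which is a little cleaner than the paper's reduction.

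\textbf{What your framing buys.} It explains the two curvature contributions: one intrinsic (Ricci) and one from the normal component of the extrinsic Hessian. It also gives the identity
\[
|v|^2_{H^2(\Gamma)}=\|\Delta_\Gamma v\|^2_{L^2(\Gamma)}+\int_\Gamma\big(2|\mathcal{H}\nabla_\Gamma v|^2-H\,\mathcal{H}\nabla_\Gamma v\cdot\nabla_\Gamma v\big)\dee\Gamma
\]
with the correct sign. On $\go$ the extra term is $R^{-2}\|\tau\cdot\nabla_\go v\|^2_{L^2(\go)}\geq 0$, as it must be since $\go$ is intrinsically flat. The paper's intermediate display carries the commutator integral with the opposite sign. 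This is harmless there, because only $\|\mathcal{H}H-2\mathcal{H}^2\|_{L^\infty(\Gamma)}$ enters the final bound.

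\textbf{Two remarks on your Step 2.} First, if you verify it as you propose, by integrating $\D{i}\D{j}v\,\D{i}\D{j}v$ by parts and commuting, you are computing $\int_\Gamma|D^2_\Gamma v|^2$ rather than $\int_\Gamma|\nabla^2 v|^2$. You then land exactly on the paper's proof, and Step 1 becomes unnecessary. Second, the mixed term you worry about does not survive. The only second-order-looking contribution, $\nu_i(\mathcal{H}\nabla_\Gamma\D{i}v)_j$, equals $-(\mathcal{H}^2\nabla_\Gamma v)_j$ via $\nu_i\D{k}\D{i}v=-(\mathcal{H}\nabla_\Gamma v)_k$, so no fallback integration by parts is needed.
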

\begin{proof}
	We assume without loss of generality that $v$ is sufficiently smooth, say $v \in H_0^1(\Gamma) \cap C^\infty (\bar{\Gamma})$.
	We let $\conormal \colon \partial\Gamma \to \R^3$ the outward unit conormal to $\Gamma$.
	We calculate, summing over repeated indices,
	\begin{align*}
		|v|_{H^2(\Gamma)}
		=&
		\int_\Gamma \D{i}\D{j} v \D{i}\D{j}v
		=
		-\int_\Gamma \D{j}v \D{i}\D{i}\D{j}v + \int_{\partial\Gamma} \D{i}\D{j} v \D{j} v \conormal_i
		\\
		=&
		-\int_\Gamma \D{j}v \D{j}\D{i}\D{i}v + \int_{\partial\Gamma} \D{i}\D{j} v \D{j} v \conormal_i
		+ \int_\Gamma \D{j}v \left( \D{i}\D{i}\D{j}v - \D{j}\D{i}\D{i}v\right)
		\\
		=&
		\int_{\Gamma} \D{i}\D{i} v \D{j}\D{j}v
		+ \int_{\partial\Gamma} \left( \D{i}\D{j} v \D{j} v \conormal_i - \D{i}\D{i} v \D{j}v \conormal_j\right)
		\\
        &+ \int_\Gamma \D{j}v \left( \D{i}\D{i}\D{j}v - \D{j}\D{i}\D{i}v\right).
	\end{align*}
	The first term in the final line of the above is $\|\Delta v \|_{L^2(\Gamma)}^2$ and the second term are boundary terms which vanish since $\D{j}v = 0$ on $\partial \Gamma$.
    For the third term, we calculate using \cite[Lemma 2.6]{DziEll13}
    \begin{align*}
        \D{i}\D{i}\D{j} v
        =&
        \D{i}(\D{i}\D{j} v - \D{j}\D{i}v)
        +
        (\D{i}\D{j} - \D{j}\D{i})\D{i} v
        +
        \D{j}\D{i}\D{i} v
        \\
        =&
        \D{j}\D{i}\D{i} v
        +
        \D{i}\left( \left(\mathcal{H}\nabla_\Gamma v \right)_j \nu_i - \left(\mathcal{H}\nabla_\Gamma v \right)_i \nu_j \right)
        \\
        &+
        \left( \left(\mathcal{H}\nabla_\Gamma \D{i}v \right)_j \nu_i - \left(\mathcal{H}\nabla_\Gamma \D{i}v \right)_i \nu_j \right)
        \\
        =&
        \D{j}\D{i}\D{i} v
        +
        \D{i}\left( \left(\mathcal{H}\nabla_\Gamma v \right)_j  \right) \nu_i
        +
        \left(\mathcal{H}\nabla_\Gamma v \right)_j \D{i} \nu_i
        -
        \D{i}\nu_j \left(\mathcal{H}\nabla_\Gamma v \right)_i
        \\
        &-
        \D{i}\left(\mathcal{H}\nabla_\Gamma v \right)_i \nu_j
        +
        \left(\mathcal{H}\nabla_\Gamma \D{i}v \right)_j \nu_i
        - \left(\mathcal{H}\nabla_\Gamma \D{i}v \right)_i \nu_j,
    \end{align*}
    where we further swap derivatives to obtain
    \begin{align*}
        \left(\mathcal{H}\nabla_\Gamma \D{i}v \right)_j \nu_i
        =&
        \nu_i \mathcal{H}_{jk} \D{k}\D{i}v
        \\
        =&
        \nu_i \mathcal{H}_{jk} \left( \D{i}\D{k} v + \left(\mathcal{H} \nabla_\Gamma v \right)_i \nu_k - \left( \mathcal{H}\nabla_\Gamma v \right)_k \nu_i \right)
        =
        - (\mathcal{H}^2\nabla v)_j.
    \end{align*}
    Combining this, we have that
    \begin{equation*}
        \D{j} v \D{i}\D{i}\D{j} v
        =
        \D{j} v \left( \D{j}\D{i}\D{i} v
        +
        H \left(\mathcal{H}\nabla_\Gamma v \right)_j
        -
        2 \left(\mathcal{H}^2 \nabla_\Gamma v \right)_i
        \right).
    \end{equation*}
	This has now shown that
	\begin{align*}
		|v|_{H^2(\Gamma)}^2
		\leq&
		\|\Delta_\Gamma v\|_{L^2(\Gamma)}^2
		+
		\| \mathcal{H}H -2 \mathcal{H}^2\|_{L^\infty(\Gamma)} \|\nabla_\Gamma v\|_{L^2(\Gamma)}^2.
	\end{align*}
\end{proof}

\section{Variations of geometric quantities of interest}\label{sec:appendix}
In this appendix, we provide the variations of surface functionals.
The results are simplified from \cite{EllFriHob17}.

We recall that $W(\Gamma):= \frac{1}{2}\int_\Gamma H^2 \!\!\dee \Gamma$ and $A(\Gamma):= \int_\Gamma 1 \!\!\dee \Gamma$ and we require their first and second variations.
Throughout, we assume that $\Gamma$ is a smooth hypersurface with boundary and that $u \in C^2$.
\begin{proposition}\label{prop:firstVar}
    The first variations of $W$ and $A$ in direction $u\nu$ are given by
    \begin{equation*}\begin{split}
        W'(\Gamma)[u\nu]
        =&\
        \int_\Gamma \left( -H \Delta_\Gamma u +  \frac{1}{2}H^3 u - H|\mathcal{H}|^2 u \right) \!\! \dee \Gamma
        \\
        A'(\Gamma)[u\nu]
        =&\
        \int_\Gamma H u \!\! \dee \Gamma.
    \end{split}\end{equation*}
    Considering $\Gamma = \go$, it holds that
    \begin{equation*}\begin{split}
        W'(\go)[u\nu]
        =&\
        \frac{1}{R}\int_\go \left( -\Delta_\go u +  \frac{1}{2 R^2}u \right) \!\! \dee \go
        \\
        A'(\go)[u\nu]
        =&\
        \frac{1}{R} \int_\go u \!\! \dee \go.
    \end{split}\end{equation*}
\end{proposition}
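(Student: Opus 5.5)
The plan is to compute the first variations by the standard route of differentiating the area element and the mean curvature along a normal perturbation. Then specialise to the cylinder using $H=|\mathcal{H}|=\frac1R$.

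Let $\Gamma_s := \{x + s u(x)\nu(x) : x \in \Gamma\}$ with $u\in C^2$. I will use two classical pointwise formulas for the normal velocity $u\nu$.
\begin{itemize}
\item[(i)] The area element satisfies $\frac{\ud}{\ud s}\big|_{s=0} \ud\Gamma_s = \divg(u\nu)\dee\Gamma = Hu \dee\Gamma$, because $\nabla_\Gamma u\cdot\nu=0$ and $\divg \nu = \Tr\mathcal{H} = H$.
\item[(ii)] The normal time derivative of the mean curvature is $\partial^\circ H = -\Delta_\Gamma u - |\mathcal{H}|^2 u$. This is exactly the formula from \cite{EllFriHob17} from which the appendix results are simplified, and I would quote it rather than rederive it.
\end{itemize}
If a derivation is wanted, the route is as follows. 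Extend $u$ and $\nu$ constantly in the normal direction, so that $\nu_s = \nu - \nabla_\Gamma u + O(s^2)$. Then $H_s = \divg_s \nu_s$, and differentiating the projection $P_{\Gamma_s}$ produces the $-|\mathcal{H}|^2u$ term.

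For $A$, formula (i) alone gives $A'(\Gamma)[u\nu]=\int_\Gamma Hu\dee\Gamma$. This needs no boundary considerations, since the parametrisation is over the fixed domain $\Gamma$ and the transport is pointwise.

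For $W$, differentiate $\frac12\int_{\Gamma_s} H_s^2\dee\Gamma_s$ pulled back to $\Gamma$ by the product rule. This gives
\[
W'(\Gamma)[u\nu] = \int_\Gamma \Big( H\,\partial^\circ H + \tfrac12 H^2\cdot Hu\Big)\dee\Gamma .
\]
Inserting (ii) yields $\int_\Gamma(-H\Delta_\Gamma u + \frac12 H^3u - H|\mathcal{H}|^2u)\dee\Gamma$, as claimed. I deliberately do not integrate by parts, so no boundary terms appear at this stage. Any boundary terms only enter later, when the expression is compared with Remark \ref{rem:ObservationsOnFirstVariation}.

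For the cylinder, $\mathcal{H} = \frac1R\tau\tau^T$, because $\nabla_\go\nu = \frac1R P_\go$ restricted to the azimuthal direction, while the axial direction has zero curvature. Hence $H=\frac1R$ and $|\mathcal{H}|^2=\frac1{R^2}$, so
\[
\tfrac12H^3-H|\mathcal{H}|^2=\tfrac1{2R^3}-\tfrac1{R^3}=-\tfrac1{2R^3}.
\]
This gives $W'(\go)[u\nu]=\frac1R\int_\go(-\Delta_\go u - \frac{1}{2R^2}u)\dee\go$. This has the opposite sign from the displayed $+\frac{1}{2R^2}$, so the sign of that term must be rechecked carefully. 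It is consistent with $\sigma_0=\frac{\kappa}{2R^2}>0$ making $\kappa W'+\sigma_0A'$ vanish for such $u$. The formula $A'(\go)[u\nu]=\frac1R\int_\go u\dee\go$ is immediate.

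The main obstacle is step (ii): getting the sign convention for $H$ relative to the chosen $\nu$ right, since $-\Delta_\Gamma\id{\Gamma}=H\nu$. With $\nu$ outward on the cylinder this gives $H=+\frac1R$. That convention then fixes the signs in $\partial^\circ H$, and through it the sign of the lower-order term in the cylinder specialisation.
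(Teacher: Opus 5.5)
Your proposal is correct and follows the paper's route: the paper gives no separate derivation. It takes the general formulas from \cite{EllFriHob17} (area element rate $Hu$, $\partial^\circ H = -\Delta_\Gamma u - |\mathcal{H}|^2 u$, product rule) and specialises with $H = |\mathcal{H}| = \frac{1}{R}$ on $\go$. Your sign is the right one, since $\frac12 H^3 - H|\mathcal{H}|^2 = -\frac{1}{2R^3}$, so the displayed $+\frac{1}{2R^2}$ in $W'(\go)[u\nu]$ is a typo in the statement; both the paper's criticality claim for $\sigma_0 = \frac{\kappa}{2R^2}$ and Lemma \ref{lem:FirstDerivative} require $-\frac{1}{2R^2}$ (your only slip is the missing factor $s$ in $\nu_s = \nu - s\nabla_\Gamma u + O(s^2)$).
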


The second variations are more challenging as highlighted in \cite[Remark 3.2]{EllFriHob17}.
For geometric variations of generic functionals on generic surfaces it need not hold that taking the second variation coincides with taking the first variation twice.
However, when when considering functionals which are independent of parametrisation on closed surfaces, the two strategies coincide.
In this work, we consider only the second variations of functionals which do not depend on parametrisation, $W$ and $A$, however we consider that $\Gamma$ has a boundary.
The results within the appendix when combined with Remark 3.2 of \cite{EllFriHob17} states that the discrepancy between the two strategies depends only on the variation in the tangential direction $-u \nabla_\Gamma u$.
\begin{proposition}
    The first variations of $W$ and $A$ in direction $u \nabla_\Gamma u$ are given by
    \begin{equation*}\begin{split}
        W'(\Gamma)[u\nabla_\Gamma u]
        =&\
        \frac{1}{2}\int_\Gamma \left( 2H v \nabla_\Gamma H \cdot \nabla_\Gamma v + H^2(v\Delta_\Gamma v + |\nabla_\Gamma v |^2 ) \right)\!\!\dee \Gamma
        \\
        A'(\Gamma)[u\nabla_\Gamma u]
        =&\
        \int_\Gamma \left( v\Delta_\Gamma v + |\nabla_\Gamma v|^2 \right)\!\!\dee \Gamma.
    \end{split}\end{equation*}
    For $u$ with $u|_{\partial \Gamma} = 0$, it holds that
    $W'(\Gamma)[u\nabla_\Gamma u] = A'(\Gamma)[u\nabla_\Gamma u] = 0$.
\end{proposition}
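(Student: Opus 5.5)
The plan is to treat both functionals with the transport formula for integrals over an evolving surface. For a smooth family $\Gamma_t = \{x + tV(x) : x\in\Gamma\}$ and a quantity $g$ defined on each $\Gamma_t$, it reads
\[
	\left.\frac{\ud}{\ud t}\right|_{t=0} \int_{\Gamma_t} g \dee \Gamma
	=
	\int_\Gamma \left( \partial^\bullet g + g\, \divg V \right) \dee \Gamma ,
\]
where $\partial^\bullet g$ is the material derivative along $V$ (see \cite{DziEll13}). We apply it with $V = u\nabla_\Gamma u$, writing $u$ for the function called $v$ in the statement. This field is purely tangential, $V\cdot\nu = 0$.

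\emph{Area.} Take $g\equiv 1$, so $\partial^\bullet g = 0$ and
\[
	A'(\Gamma)[V] = \int_\Gamma \divg(u\nabla_\Gamma u) \dee \Gamma .
\]
The product rule gives $\divg(u\nabla_\Gamma u) = u\Delta_\Gamma u + |\nabla_\Gamma u|^2$, which is the stated formula.

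\emph{Willmore energy.} Take $g = \tfrac12 H^2$, so we need $\partial^\bullet H$ along $V$. I would use the standard variation formula for the mean curvature under a general velocity $V = V_\nu \nu + V_T$:
\[
	\partial^\bullet H = -\Delta_\Gamma V_\nu - |\mathcal H|^2 V_\nu + \nabla_\Gamma H\cdot V_T .
\]
This formula underlies Proposition \ref{prop:firstVar}, and it can be checked with the paper's sign convention $-\Delta_\Gamma \id{\Gamma} = H\nu$. Here $V_\nu = 0$ and $V_T = u\nabla_\Gamma u$, so
\[
	\partial^\bullet \tfrac12 H^2 = H\, u\, \nabla_\Gamma H\cdot\nabla_\Gamma u .
\]
Inserting this and $\divg V = u\Delta_\Gamma u + |\nabla_\Gamma u|^2$ into the transport formula gives exactly the stated expression for $W'(\Gamma)[u\nabla_\Gamma u]$.

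The step I expect to be the main obstacle is justifying that the tangential part contributes only the pure transport term $\nabla_\Gamma H\cdot V_T$ to $\partial^\bullet H$. My first approach is an intrinsic argument. A tangential velocity generates, to first order, a reparametrisation of $\Gamma$ by its own flow. Along that flow the geometry is unchanged, so the material derivative of $H$ is just its directional derivative. If this has to be made precise for a surface with boundary, where the flow need not preserve $\Gamma$, I would instead differentiate $\nu$ and then $\mathcal H = \nabla_\Gamma\nu$ directly, using the formulae of \cite{DziEll13}, and check that all curvature terms cancel when $V\cdot\nu = 0$.

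\emph{Vanishing when $u|_{\partial\Gamma}=0$.} Both integrands are exact tangential divergences:
\[
	H u\nabla_\Gamma H\cdot\nabla_\Gamma u + \tfrac12 H^2 \divg V = \divg\!\left(\tfrac12 H^2 V\right),
	\qquad
	u\Delta_\Gamma u + |\nabla_\Gamma u|^2 = \divg V .
\]
Apply the Stokes formula \eqref{eq:StokesFormula} to each component, contract with $V$, and use $V\cdot\nu = 0$. The curvature term $\int_\Gamma fH\nu\cdot V$ drops out, and only the boundary integrals $\int_{\partial\Gamma}\tfrac12 H^2 u\nabla_\Gamma u\cdot\conormal$ and $\int_{\partial\Gamma} u\nabla_\Gamma u\cdot\conormal$ remain. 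Both vanish when $u=0$ on $\partial\Gamma$, which gives the final claim.
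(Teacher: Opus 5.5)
Your proposal is correct and follows essentially the paper's route: the paper gives no separate argument but specialises the general-velocity first-variation (transport) formulas of \cite{EllFriHob17} to the tangential field $u\nabla_\Gamma u$, which is exactly your combination of $\divg(u\nabla_\Gamma u)=u\Delta_\Gamma u+|\nabla_\Gamma u|^2$, $\partial^\bullet H=\nabla_\Gamma H\cdot V$ for tangential $V$, and Stokes with $V\cdot\nu=0$. The step you flag as the main obstacle is harmless, because that material-derivative identity is pointwise and local; the boundary of $\Gamma$ therefore plays no role in it and enters only through the final boundary integrals.
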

From which, we may calculate the second variations as follows.
\begin{proposition}\label{prop:secondVar}
    The second variations of $W$ and $A$ in direction $(u\nu, u\nu)$ are given by
    \begin{equation*}\begin{split}
        W''(\Gamma)[u\nu,u\nu]
        =&\
        \int_\Gamma \bigg( (\Delta_\Gamma u + |\mathcal{H}|^2 u)^2 + 4 H \mathcal{H}:u D^2_\Gamma u + 2 H\mathcal{H}\nabla_\Gamma u \cdot \nabla_\Gamma u - 3 H^3 u \Delta_\Gamma u
		\\
		&\quad\quad\quad
        - \frac{3}{2}H^2 |\nabla_\Gamma u|^2 +
            \left(2H \Tr(\mathcal{H}^3)- \frac{5}{2}H^2 |\mathcal{H}|^2 + \frac{1}{2}H^4 \right)u^2
        \bigg) \!\!\dee \Gamma
        \\
        A''(\Gamma)[u\nu,u\nu]
        =&\
        \int_\Gamma \left( |\nabla_\Gamma u|^2 + \left(H^2 - |\mathcal{H}|^2\right)u^2 \right)\!\!\dee \Gamma.
    \end{split}\end{equation*}
    Considering $\Gamma = \go$, it holds that
    \begin{equation*}\begin{split}
        W''(\go)[u\nu,u\nu]
        =&\
        \int_\go \bigg( (\Delta_\go u)^2 - \frac{2}{R^2}|\tau \cdot \nabla_\go u|^2 - \frac{1}{2 R^2} |\nabla_\go u|^2 + \frac{1}{R^4} u^2
        \bigg) \!\!\dee\go,
        \\
        A''(\go)[u\nu,u\nu]
        =&\
        \int_\go |\nabla_\go u|^2  \!\!\dee\go.
    \end{split}\end{equation*}
\end{proposition}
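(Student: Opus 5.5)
We have to prove Proposition B (second variations) — the last statement, Proposition \ref{prop:secondVar}. It gives the general formulas for $W''$ and $A''$ and then their specialisations to $\go$.

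The plan is to take the general formulas for $W''(\Gamma)[u\nu,u\nu]$ and $A''(\Gamma)[u\nu,u\nu]$ from \cite{EllFriHob17}. There they are derived for closed surfaces by differentiating along the path $s\mapsto \Gamma_s=\{x+su\nu\}$. For a surface with boundary, the only change arises where the second variation along the normal path differs from ``first variation applied twice''. By the preceding proposition and \cite[Remark 3.2]{EllFriHob17}, that discrepancy is exactly a first variation in the tangential direction $-u\nabla_\Gamma u$. We use the general formulas only for $u|_{\partial\Gamma}=0$, i.e.\ $u\in H^2_0$ or $H^2\cap H^1_0$ by density. Then the preceding proposition shows $W'(\Gamma)[u\nabla_\Gamma u]=A'(\Gamma)[u\nabla_\Gamma u]=0$, so no boundary correction appears.

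For the general formulas themselves, we would compute pointwise along $\Gamma_s$:
\begin{itemize}
\item the area element expands as $1+sHu+\tfrac{s^2}{2}\big(|\nabla_\Gamma u|^2+(H^2-|\mathcal H|^2)u^2\big)+O(s^3)$;
\item the mean curvature is $H_s=H-s(\Delta_\Gamma u+|\mathcal H|^2u)+s^2(\cdots)$, with the $s^2$ coefficient involving $\mathcal H:uD^2_\Gamma u$, $\mathcal H\nabla_\Gamma u\cdot\nabla_\Gamma u$ and $\Tr(\mathcal H^3)u^2$.
\end{itemize}
Expanding $\tfrac12 H_s^2\,\ud\Gamma_s$ to second order and collecting terms gives the stated integrand. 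Note that the area formula has no boundary terms, since it is a pointwise expansion.

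The specialisation to $\go$ uses $\mathcal H=\frac1R\tau\tau^T$, $H=\frac1R$, $|\mathcal H|^2=\frac1{R^2}$ and $\Tr(\mathcal H^3)=\frac1{R^3}$. With these, $A''$ reduces immediately to $\int|\nabla u|^2$, since $H^2-|\mathcal H|^2=0$. For $W''$ we substitute term by term:
\begin{itemize}
\item $(\Delta u+u/R^2)^2$;
\item $\frac4{R^2}u\,\tau^TD^2u\,\tau$;
\item $\frac2{R^2}(\tau\cdot\nabla u)^2$;
\item $-\frac3{R^3}u\Delta u$;
\item $-\frac3{2R^2}|\nabla u|^2$;
\item the coefficient of $u^2$, which becomes $(2-\tfrac52+\tfrac12)R^{-4}=0$.
\end{itemize}

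The key step, and the main obstacle, is integrating by parts the terms $u\Delta u$ and $u\,\tau^TD^2u\,\tau$ using $u|_{\partial\go}=0$. First, $\int u\Delta u=-\int|\nabla u|^2$. Second, $\tau^TD^2u\,\tau=\partial_\theta^2 U/R^2$ in coordinates, since the cylinder is flat and $\tau$ is parallel. Integrating by parts in the periodic $\theta$ variable, with no boundary contribution, gives $\int u\,\tau^TD^2u\,\tau=-\int(\tau\cdot\nabla u)^2$.

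Collecting terms, the $(\tau\cdot\nabla u)^2$ coefficient is $-4+2=-2$ (over $R^2$). The cross term $2u\Delta u/R^2$ contributes $-2|\nabla u|^2/R^2$ and $-3u\Delta u/R^3$ contributes $+3|\nabla u|^2/R^3$. Care is needed here with powers of $R$: I would recheck the $H^3$ term's scaling, since it should read $H^2$-type dimensionally. Assuming it reads $-3H^2u\Delta u$, the gradient coefficient is $-2+3-\tfrac32=-\tfrac12$ (over $R^2$). The surviving $u^2/R^4$ comes from squaring the first term. This yields exactly the stated formula.
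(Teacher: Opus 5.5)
Your plan follows the paper's own route: take the general formulas from \cite{EllFriHob17}, use the preceding proposition to discard the tangential discrepancy when $u|_{\partial\Gamma}=0$, then specialise to $\go$ using $\int_\go u\Delta_\go u=-\int_\go|\nabla_\go u|^2$ and periodic integration by parts in $\theta$. Your suspicion about the printed term is correct: it must read $-3H^2u\Delta_\Gamma u$, since with $H^3$ the stated $\go$ formula only follows for $R=1$. One caution on your optional direct expansion: the $s^2$-coefficient of $H_s$ also contains $-\tfrac12H|\nabla_\Gamma u|^2+u\nabla_\Gamma H\cdot\nabla_\Gamma u$, and the resulting integrand matches the stated one only up to $\divg(H^2u\nabla_\Gamma u)$, which integrates to zero precisely because $u|_{\partial\Gamma}=0$.
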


\bibliographystyle{alphaTwo}
\bibliography{bib}

\end{document}